\def\res{{\operatorname{res}}}
\def\cR{{\mathcal{R}}}
\def\cO{{\mathcal{O}}}
\newcommand{\bN} { {\mathbb{N}}}
\newcommand{\bC} { {\mathbb{C}}}
\newcommand{\bQ} { {\mathbb{Q}}}
\newcommand{\bZ} { {\mathbb{Z}}}
\newcommand{\cP} { {\mathcal{P}}}
\newcommand{\cE} { {\mathcal{E}}}
\newcommand{\si} { {\sigma}}
\newcommand{\EOP} { {\hfill $\Box$}}
\begin{document}

\title*{How to generate all possible rational Wilf-Zeilberger pairs?\\
\bigskip
{\small \text{Dedicated to the memory of Jonathan M.\ Borwein and Ann Johnson}}}
\titlerunning{How to generate all possible rational Wilf--Zeilberger pairs?}

\author{Shaoshi Chen}

\institute{Shaoshi Chen \at KLMM, Academy of Mathematics and Systems Science, Chinese Academy of Sciences, \\ Beijing 100190, (China)\\
School of Mathematical Sciences, University of Chinese Academy of Sciences, \\ Beijing 100049, (China)\\
\email{schen@amss.ac.cn} \smallskip \\
This work was supported by the NSFC grants 11501552, 11688101 and
by the Frontier Key Project (QYZDJ-SSW-SYS022) and the Fund of the Youth Innovation Promotion Association, CAS}
%
%
\maketitle


\abstract{ A Wilf--Zeilberger pair $(F, G)$ in the discrete case satisfies the equation
\[ F(n+1, k) - F(n, k) = G(n, k+1) - G(n, k).\]
We present a structural description of all possible rational Wilf--Zeilberger pairs and their continuous and mixed analogues.
}

\section{Introduction} \label{SECT:intro}

The Wilf--Zeilberger (abbr.\ WZ) theory~\cite{Wilf1992, WilfZeilberger1992, PWZbook1996} has become a bridge between symbolic computation and combinatorics.
Through this bridge, not only classical combinatorial identities from handbooks and long-standing conjectures in combinatorics, such as Gessel's conjecture~\cite{KKZ2009, Bostan2010} and $q$-TSPP conjecture~\cite{KKZ2011}, are proved algorithmically, but also some new identities and conjectures related to mathematical constants, such as $\pi$ and zeta values,  are discovered via computerized guessing~\cite{Gessel1995, Borwein2004, Sun2011, CHZ2016}.

WZ-pair is one of leading concepts in the WZ theory that was originally introduced in~\cite{WilfZeilberger1992} with a recent brief description in~\cite{Tefera2010}.
In the discrete case,  a WZ-pair $(F(n, k), G(n, k))$ satisfies the WZ equation
\[ F(n+1, k) - F(n, k) = G(n, k+1) - G(n, k),
\]
where both $F$ and $G$ are hypergeometric terms, i.e., their shift quotients with respect to $n$ and $k$ are rational
functions in $n$ and $k$, respectively.
Once a WZ-pair is given, one can sum on both sides of the above equation over $k$ from $0$ to $\infty$ to get
\[\sum_{k=0}^{\infty} F(n+1, k) - \sum_{k=0}^\infty F(n, k)= \lim_{k\rightarrow\infty} G(n, k+1) - G(n, 0).\]
If $G(n, 0)$ and $\lim_{k\rightarrow\infty} G(n, k+1)$ are $0$ then we obtain
\[\sum_{k=0}^{\infty} F(n+1, k) = \sum_{k=0}^\infty F(n, k),\]
which implies that $\sum_{k=0}^\infty F(n, k)$ is independent of $n$. Thus, we get
the identity  $\sum_{k=0}^\infty F(n, k) = c$, where the constant $c$ can be determined by evaluating the sum for one value of $n$.
We may also get a companion identity by summing the WZ-equation over $n$.
For instance, the pair $(F, G)$ with
\[F = \frac{\binom{n}{k}^2}{\binom{2n}{n}} \quad \text{and} \quad  G= {\frac {( 2k-3n-3) {k}^{2}}{ 2(2n+1)( -n-1+k)^{2}}} \cdot \frac{\binom{n}{k}^2}{\binom{2n}{n}}
 \]
leads to two identities
\[\sum_{k=0}^\infty \binom{n}{k}^2 = \binom{2n}{n} \quad \text{and} \quad \sum_{n=0}^\infty \frac{(3n-2k+1)}{2(2n+1)\binom{2n}{n}} \binom{n}{k}^2 = 1. \]

Besides to prove combinatorial identities, WZ-pairs have many other applications.
One of the applications can be traced back to Andrei Markov's 1890 method for convergence-acceleration of series for
computing $\zeta(3)$, which leads to the Markov-WZ method~\cite{MZ2004, Kondratieva2005, Mohammed2005}.
WZ-pairs also play a central role in the study of finding Ramanujan-type and Zeilberger-type series for constants
involving $\pi$ in~\cite{EZ1994, Guillera2002, Guillera2006, Guillera2010, Guillera2013, Liu2012, Zudilin2011, HKS2018}, zeta values~\cite{Pilehrood2008a, Pilehrood2008b}
and their $q$-analogues~\cite{Pilehrood2011, GuoLiu2018, GuoZudilin2018}. Most recent applications are related to
congruences and super congruences~\cite{Zudilin2009, Long2011, Sun2011, Sun2012, Sun2013, Sun2013b, Guo2017, Guo2018}.

For appreciation we select some remarkable $(q)$-series about $\pi, \zeta(3)$ together with (super)-congruences whose proofs can be obtained via WZ-pairs
as follows (this list is surely not comprehensive):

\begin{enumerate}
\item Ramanujan's series for $1/\pi$:  first recorded in Ramanujan's second notebook, proved by Bauer in~\cite{Bauer1859}, and by Ekhad and Zeilberger using WZ-pairs in~\cite{EZ1994}.
For a nice survey on Ramanujan's series, see~\cite{BBC2009}.
\[\frac{2}{\pi} = \sum_{k=0}^\infty \frac{4k+1}{(-64)^k} \binom{2k}{k}^3 .
\]

\item Guillera's series for $1/\pi^2$: found and proved by Guillera in 2002 using WZ-pairs~\cite{Guillera2002}. For more results on Ramanujan-type series for~$1/\pi^2$, see Zudilin's surveys~\cite{Zudilin2007, Zudilin2011}.
  \[\frac{128}{\pi^2} = \sum_{k=0}^\infty (-1)^k \binom{2k}{k}^5 \frac{820k^2 + 180k + 13}{2^{20k}} .\]

  \item Guillera's Zeilberger-type series for $\pi^2$: found and proved by Guillera using WZ-pairs in~\cite{Guillera2008}.

  \[\frac{\pi^2}{2} = \sum_{k=1}^\infty \frac{(3k-1)16^k}{k^3\binom{2k}{k}^3}.\]

  \item Markov--Ap\'ery's series for $\zeta(3)$: first discovered by Andrei Markov in 1890, used by Ap\'ery for his irrationality proof, and proved by Zeilberger using WZ-pairs in~\cite{Zeilberger1993}.
\[
\zeta(3)  = \frac{5}{2} \sum_{k=1}^\infty \frac{(-1)^{k-1}}{k^3\binom{2k}{k}}.
\]

\item Amdeberhan's series for $\zeta(3)$:  proved by Amdeberhan in 1996 using WZ-pairs~\cite{Amdeberhan1996}.
\[\zeta(3) = \frac{1}{4} \sum_{k=1}^\infty (-1)^{k-1}\frac{56k^2 - 32k+5}{k^3 (2k-1)^2 \binom{2k}{k}\binom{3k}{k}}.\]

  \item Bailey--Borwein--Bradley identity: experimentally discovered and proved by Bailey et al. in~\cite{BBB2006}, a proof using the Markov-WZ method is given in~\cite{Pilehrood2008b, Pilehrood2008b} and its $q$-analogue
  is presented in~\cite{Pilehrood2011}.
   \[  \sum_{k=0}^\infty \zeta(2k+2) z^{2k} = 3 \sum_{k=1}^\infty \frac{1}{\binom{2k}{k}(k^2-z^2)} \prod_{m=1}^{k-1} \frac{m^2-4z^2}{m^2-z^2}, \quad \text{$z\in \bC$ with $|z|<1$}. \]

   \item van Hamme's supercongruence I: first conjectured by van Hamme~\cite{Hamme1997}, proved by Mortenson~\cite{Mortenson2008} using $_6F_5$ transformations and by Zudilin~\cite{Zudilin2009}
  using WZ-pairs.
  \[\sum_{k=0}^{\frac{p-1}{2}} \frac{4k+1}{(-64)^k} \binom{2k}{k}^3 \equiv p(-1)^{\frac{p-1}{2}} (\text{mod}~p^3), \]
where $p$ is an odd prime and the multiplicative inverse of $(-64)^k$ should be computed modulo $p^3$.

  \item van Hamme's supercongruence II: first conjectured by van Hamme~\cite{Hamme1997}, proved by Long~\cite{Long2011} using hypergeometric evaluation identities, one of which is obtained by Gessel using WZ-pairs in~\cite{Gessel1995}.
  \[\sum_{k=0}^{\frac{p-1}{2}}{\frac{6k+1}{256^k}}\binom{2k}{k}^3 \equiv p(-1)^{\frac{p-1}{2}} (\text{mod}~p^4),\]
where $p>3$ is a prime and the multiplicative inverse of $(256)^k$ should be computed modulo $p^4$.

  \item Guo's $q$-analogue of van Hamme's supercongruence I: discovered and proved recently by Guo using WZ-pairs in~\cite{Guo2018}.
  \[\sum_{k=0}^{\frac{p-1}{2}}(-1)^k q^{k^2}[4k+1]_q\frac{(q;q^2)_k^3}{(q^2;q^2)_k^3}
\equiv [p]_qq^{\frac{(p-1)^2}{4}} (-1)^{\frac{p-1}{2}}\pmod{[p]_q^3},\]
where for $n\in \bN$, $(a;q)_n := (1-a)(1-aq)\cdots (1-aq^{n-1})$ with $(a;q)_0 = 1$, $[n]_q= 1+q+\cdots + q^{n-1}$  and $p$ is an odd prime.
  \item Hou--Krattenthaler--Sun's $q$-analogue of Guillera's Zeilberger-type series for $\pi^2$:  inspired by a recent conjecture on supercongruence  by Guo in~\cite{Guo2018b}, and proved using WZ-pairs in~\cite{HKS2018}.
  This work is also connected to other emerging developments on $q$-analogues of series for famous constants and formulae~\cite{Sun2018, GuoZudilin2018, GuoLiu2018}.
  \[2\sum_{k=0}^\infty q^{2k^2+2k}(1+q^{2k^2+2}-2q^{4k+3}) \frac{(q^2;q^2)_k^3}{(q;q^2)^3_{k+1}(-1; q)_{2k+3}} = \sum_{k=0}^\infty \frac{q^{2k}}{(1-q^{2k+1})^2} .  \]

\end{enumerate}

%
%
%
%
%
%
%

For applications, it is crucial to have WZ-pairs at hand. In the previous work, WZ-pairs are obtained
either by guessing from the identities to be proved using Gosper's algorithm or by certain transformations from a given WZ-pair~\cite{Gessel1995}.
Riordan in the preface of his book~\cite{Riordan1968} commented that~\lq\lq the central fact developed is that identities are both
inexhaustible and unpredictable; the age-old dream of putting order in this chaos is doomed to failure~\rq\rq. As an optimistic respond to Riordan's comment,
Gessel in his talk\footnote{The talk was given at the Waterloo Workshop in Computer Algebra (in honor of Herbert Wilf's 80th birthday), Wilfrid Laurier University, May 28, 2011.
For the talk slides, see the link:~\url{http://people.brandeis.edu/~gessel/homepage/slides/wilf80-slides.pdf}} on the WZ method motivated with some examples that~\lq\lq WZ forms bring order to this chaos~\rq\rq, where WZ-forms are a multivariate generalization of WZ-pairs~\cite{Zeilberger1993}. With the hope of discovering more combinatorial identities in an intrinsic and algorithmic way, it is natural and challenging to ask the following question.

\begin{problem}\label{PB:WZ}
How to generate all possible WZ-pairs algorithmically?
\end{problem}

This problem seems quite open, but every promising project needs a starting point.
In~\cite{Liu2015}, Liu had described the structure of
a special class of analytic WZ-functions with $F=G$ in terms of Rogers--Szeg\"{o} polynomials and Stieltjes--Wigert polynomials in the $q$-shift case.
In~\cite{Sun2012}, Sun studied the relation between generating functions of $F(n, k)$ and $G(n, k)$ if $(F, G)$ is a WZ-pair and applied this relation
to prove some combinatorial identities.
In this paper, we solve the problem completely for the first non-trivial case, namely, the case of rational WZ-pairs.
To this end, let us first introduce some notations. Throughout this paper, let $K$ be a field of characteristic zero and $K(x, y)$ be the field of rational functions
in $x$ and $y$ over $K$. Let $D_x = \partial/\partial_x$ and $D_y = \partial/\partial_y$ be the usual
derivations with respect to $x$ and $y$, respectively. The shift operators $\si_x$ and $\si_y$ are defined respectively as
\[\si_x(f(x, y)) = f(x+1, y) \quad \text{and} \quad \si_y(f(x, y)) = f(x, y+1) \quad \text{for $f\in K(x, y)$.}\]
For any $q\in K\setminus \{0\}$, we define the $q$-shift operators $\tau_{q, x}$ and $\tau_{q, y}$ respectively as
\[\tau_{q, x}(f(x, y)) = f(qx, y) \quad \text{and} \quad \tau_{q, y}(f(x, y)) = f(x, qy) \quad \text{for $f\in K(x, y)$.}\]
For $z\in \{x, y\}$, let~$\Delta_z$ and $\Delta_{q, z}$ denote the difference and $q$-difference operators defined by $\Delta_z(f) = \si_z(f) - f$ and $\Delta_{q, z}(f)= \tau_{q, z}(f)-f$ for $f\in K(x, y)$, respectively.

\begin{definition}
Let $\partial_x \in \{D_x, \Delta_x, \Delta_{q, x}\}$ and $\partial_y \in \{D_y, \Delta_y, \Delta_{q, y}\}$. A pair $(f, g)$ with $f, g\in K(x, y)$
is called a \emph{WZ-pair} with respect to $(\partial_x, \partial_y)$ in $K(x, y)$ if $\partial_x(f) = \partial_y(g)$.
\end{definition}
The set of all rational WZ-pairs in $K(x, y)$ with respect to $(\partial_x, \partial_y)$ forms a linear space over $K$, denoted by $\cP_{(\partial_x, \partial_y)}$.
A WZ-pair $(f, g)$ with respect to $(\partial_x, \partial_y)$ is said to be \emph{exact\footnote{This is motivated by the fact that a differential form $\omega = gdx + fdy$ with $f, g \in K(x, y)$ is exact in $K(x, y)$ if and only if $f = D_y(h)$ and $g = D_x(h)$ for some $h\in K(x, y)$.}} if there exists $h\in K(x, y)$ such that $f= \partial_y(h)$
and $g = \partial_x(h)$.
Let $\cE_{(\partial_x, \partial_y)}$ denote the set of all exact WZ-pairs with respect to $(\partial_x, \partial_y)$, which
forms a subspace of $\cP_{(\partial_x, \partial_y)}$.
The goal of this paper is to provide an explicit description of the structure of  the quotient space $\cP_{(\partial_x, \partial_y)}/\cE_{(\partial_x, \partial_y)}$.

The remainder of this paper is organized as follows.
As our key tools, residue criteria for rational integrability and summability
are recalled in Section~\ref{SECT:res}. In Section~\ref{SECT:structure}, we present structure theorems
for rational WZ-pairs in three different settings. This paper
ends with a conclusion along with some remarks on the future research.

\section{Residue criteria}  \label{SECT:res}

In this section, we recall the notion of residues and their ($q$-)discrete analogues for rational functions and
some residue criteria for rational integrability and summability from~\cite{BronsteinBook, ChenSinger2012, HouWang2015}.

Let $F$ be a field of characteristic zero and $F(z)$ be the field of rational functions
in $z$ over $F$. Let~$D_z$ be the usual derivation on $F(z)$ such that $D_z(z)=1$ and $D_z(c)$=0 for all $c\in F$.
A rational function $f\in F(z)$ is said to be \emph{$D_z$-integrable} in $F(z)$ if $f = D_z(g)$ for some $g\in F(z)$.
By the irreducible partial fraction decomposition, one can always uniquely write $f\in F(z)$ as
\begin{equation}\label{EQ:pfdc}
f = q + \sum_{i=1}^n \sum_{j=1}^{m_i} \frac{a_{i, j}}{ d_i^j},
\end{equation}
where~$q, a_{i, j}, d_i \in F[z]$, $\deg_z(a_{i, j})< \deg_z(d_i)$ and the $d_i$'s
are distinct irreducible and monic polynomials.
We call $a_{i, 1}$ the \emph{pseudo $D_z$-residue} of $f$ at $d_i$,
denoted by $\text{pres}_{D_z}(f, d_i)$. For an irreducible polynomial $p\in F[z]$, we let $\cO_p $ denote the set
\[\cO_p := \left\{\frac{a}{b}\in F(z)\mid \text{$a, b \in F[z]$ with~$\gcd(a, b)$ and ${p}\nmid b$}\right\},\]
and let $\cR_p$ denote the set $\{f\in F(z)\mid pf\in \cO_p\}$. If $f\in \cR_p$, the pseudo-residue $\text{pres}_{D_z}(f, p)$ is called the \emph{$D_z$-residue}
of $f$ at $p$, denoted by $\res_{D_z}(f, p)$. The following example shows that pseudo-residues may not be
the obstructions for $D_z$-integrability in $F(z)$.

\begin{example}
Let $F := \bQ$ and $f = (1-x^2)/(x^2+1)^2$. Then the irreducible partial fraction decomposition of $f$
is of the form
\[f  = \frac{2}{(x^2+1)^2}- \frac{1}{x^2+1}.\]
The pseudo-residue of $f$ at $x^2+1$ is $-1$, which is nonzero. However, $f$ is $D_z$-integrable in $F(z)$ since $f = D_z(x/(x^2+1))$.
\end{example}
The following lemma shows that $D_z$-residues are the only obstructions for $D_z$-integrability of rational functions
with squarefree denominators, so are pseudo-residues if $F$ is algebraically closed.

\begin{lemma}\cite[Proposition 2.2]{ChenSinger2012}\label{LM:dres}
Let~$f=a/b\in F(z)$ be such that $a, b\in F[z]$, $\gcd(a, b)=1$.  If $b$ is squarefree, then $f$ is $D_z$-integrable
in $F(z)$ if and only if $\res_{D_z}(f, d)=0$ for any irreducible factor $d$ of $b$. If $F$ is algebraically closed, then
$f$ is $D_z$-integrable in $F(z)$ if and only if
$\text{pres}_{D_z}(f, z-\alpha)=0$ for any root $\alpha$ of the denominator $b$.
\end{lemma}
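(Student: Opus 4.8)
The plan is to read the obstruction off the irreducible partial fraction decomposition~\eqref{EQ:pfdc} of $f$, exploiting one structural fact about $D_z$: differentiation raises the order of every pole by \emph{exactly one}. Consequently a rational derivative can never have a simple pole, which will handle the squarefree case directly; and each higher-order partial fraction $c/p^j$ with $j\ge 2$ is visibly $D_z$-integrable, which together with uniqueness of~\eqref{EQ:pfdc} will handle the algebraically closed case. So I would isolate the pole-order statement first and then run both equivalences through it.

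\emph{Step 1 (differentiation raises pole order by one).} Let $p\in F[z]$ be irreducible and $c\in F[z]$ with $p\nmid c$ (so $c\ne 0$). For $j\ge 1$ one computes
\[ D_z\!\left(\frac{c}{p^{\,j}}\right)=\frac{c'p-j\,cp'}{p^{\,j+1}}. \]
Since $\deg_z(p')<\deg_z(p)$ and $p'\ne 0$ (characteristic zero), the irreducible $p$ divides neither $p'$ nor $c$, hence $p\nmid j\,cp'$; as $p\mid c'p$, the numerator $c'p-j\,cp'$ is prime to $p$. Writing $v_p$ for the $p$-adic valuation on $F(z)$, this gives $v_p\big(D_z(c/p^{\,j})\big)=-(j+1)$. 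Combined with the elementary fact that $D_z$ maps $\cO_p$ into $\cO_p$, we conclude: for every $g\in F(z)$, either $v_p(g)\ge 0$ and then $v_p(D_zg)\ge 0$, or $v_p(g)=-k<0$ and then $v_p(D_zg)=-(k+1)\le -2$. In no case does $D_zg$ have a simple pole at $p$.

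\emph{Step 2 (squarefree case).} Since $b$ is squarefree, all $m_i$ in~\eqref{EQ:pfdc} equal $1$, and for each irreducible factor $d_i$ of $b$ we have $d_if\in\cO_{d_i}$, so $f\in\cR_{d_i}$ and $\res_{D_z}(f,d_i)=\text{pres}_{D_z}(f,d_i)=a_{i,1}$. If all these residues vanish, then $f=q\in F[z]$, which is $D_z$-integrable term by term (characteristic zero). Conversely, if $f=D_z(g)$, then by Step 1 $f$ has no simple pole; but $\gcd(a,b)=1$ with $b$ squarefree forces every pole of $f$ to be simple, so $f$ has no poles at all, i.e.\ $f\in F[z]$ and every $\res_{D_z}(f,d_i)=a_{i,1}=0$. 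For the algebraically closed case, every monic irreducible polynomial is $z-\alpha$, so~\eqref{EQ:pfdc} reads $f=q+\sum_\alpha\sum_{j=1}^{m_\alpha}c_{\alpha,j}/(z-\alpha)^j$ with $c_{\alpha,j}\in F$ and $\text{pres}_{D_z}(f,z-\alpha)=c_{\alpha,1}$. If all $c_{\alpha,1}=0$, then $f$ is $q$ plus a sum of terms $c_{\alpha,j}/(z-\alpha)^j$ with $j\ge 2$, and since $D_z\big(\tfrac{-c_{\alpha,j}}{(j-1)(z-\alpha)^{j-1}}\big)=c_{\alpha,j}/(z-\alpha)^j$ (using $j-1\ne 0$), $f$ is $D_z$-integrable. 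Conversely, if $f=D_z(g)$, Step 1 at $p=z-\alpha$ gives $v_{z-\alpha}(f)\ne -1$, so the $1/(z-\alpha)$ term is absent from the unique decomposition of $f$, i.e.\ $c_{\alpha,1}=0$ for every root $\alpha$ of $b$.

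The only genuinely delicate point is the non-divisibility claim in Step 1 — that $p\nmid c'p-j\,cp'$ — which is precisely where irreducibility of $p$ together with characteristic zero (guaranteeing $p'\ne 0$ and $j\ne 0$) are needed, and it is exactly what would fail in positive characteristic. Everything after that is bookkeeping with~\eqref{EQ:pfdc} and the uniqueness of that decomposition, so I do not expect any further obstacle.
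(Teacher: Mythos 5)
Your squarefree case is correct and is essentially the standard argument: a derivative can never have a simple pole, while $\gcd(a,b)=1$ with $b$ squarefree forces every pole of $f$ to be simple, so integrability is equivalent to $f$ being a polynomial, i.e.\ to the vanishing of all the residues $a_{i,1}$. Step 1 (the exact drop of the valuation under $D_z$) is also proved correctly.

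The algebraically closed case, however, contains a genuine gap in the ``only if'' direction. You argue: ``Step 1 at $p=z-\alpha$ gives $v_{z-\alpha}(f)\neq -1$, so the $1/(z-\alpha)$ term is absent from the unique decomposition of $f$.'' This inference is invalid: knowing that the pole order of $f$ at $\alpha$ is not exactly $1$ says nothing about the coefficient $c_{\alpha,1}$ when $f$ has a pole of order at least $2$ there. For instance $h=(z-\alpha+1)/(z-\alpha)^2$ satisfies $v_{z-\alpha}(h)=-2\neq -1$ and yet $\text{pres}_{D_z}(h,z-\alpha)=1\neq 0$. The valuation records only the leading Laurent coefficient, whereas the pseudo-residue is the coefficient of $(z-\alpha)^{-1}$, which can well be nonzero behind a higher-order pole; this is exactly the phenomenon the paper's own Example after the definition is warning about. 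The repair is easy and uses only what you already computed: decompose $g$ itself by~\eqref{EQ:pfdc} over the linear irreducibles; since the numerators there are constants, $D_z\bigl(c/(z-\alpha)^j\bigr)=-jc/(z-\alpha)^{j+1}$ is a \emph{pure} term of order $j+1\geq 2$, so $f=D_z(g)$ is a polynomial plus terms $c'/(z-\alpha)^{j}$ with $j\geq 2$ only, and uniqueness of the decomposition of $f$ forces every $c_{\alpha,1}$ to vanish. This is the classical ``the residue of a derivative is zero,'' a statement about the full Laurent expansion rather than the valuation. With that one step replaced, your proof is complete and follows the same partial-fraction route as the cited source.
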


By the Ostrogradsky--Hermite reduction~\cite{Ostrogradsky1845, Hermite1872, BronsteinBook}, we can decompose a rational function $f\in F(z)$
as  $f = D_z(g) + a/b$, where $g\in F(z)$ and~$a, b \in F[z]$ are such that $\deg_z(a)<\deg_z(b), \gcd(a, b)=1$,
and $b$ is a squarefree polynomial in $F[z]$. By Lemma~\ref{LM:dres}, $f$ is $D_z$-integrable in $F(z)$ if and only if $a=0$.


We now recall the ($q$-)discrete analogue of
$D_z$-residues introduced in~\cite{ChenSinger2012, HouWang2015}.
Let $\phi$ be an automorphism of $F(z)$ that fixes $F$. For a polynomial $p\in F[z]$,
we call the set $\{\phi^i(p)\mid i\in \bZ\}$ the \emph{$\phi$-orbit} of $p$, denoted by $[p]_{\phi}$.
Two polynomials $p, q\in F[z]$ are said to be $\phi$-equivalent (denoted as $p\sim_{\phi} q$) if they are in the same $\phi$-orbit, i.e., $p = \phi^i(q)$ for some $i\in \bZ$.
For any $a, b \in F(z)$ and $m\in \bZ$, we have
\begin{equation}\label{EQ:phi}
\frac{a}{\phi^m(b)} = \phi(g) - g + \frac{\phi^{-m}(a)}{b},
\end{equation}
where $g$ is equal to $\sum_{i=0}^{m-1} \frac{\phi^{i-m}(a)}{\phi^i(b)}$ if $m\geq 0$,  and equal to $-\sum_{i=0}^{-m-1}\frac{\phi^i(a)}{\phi^{m+i}(b)}$ if $m<0$.

Let $\si_z$ be the shift operator with respect to $z$ defined by $\si_z(f(z)) = f(z+1)$. Note that $\si_z$ is an automorphism of $F(z)$ that fixes $F$.
A rational function $f\in F(z)$ is said to be \emph{$\si_z$-summable} in $F(z)$ if $f = \si_z(g)-g$ for some $g\in F(z)$.
For any $f\in F(z)$, we can uniquely decompose it into the form
\begin{equation}\label{EQ:pfdd}
f = p(z) + \sum_{i=1}^n \sum_{j=1}^{m_i} \sum_{\ell=0}^{e_{i, j}} \frac{a_{i, j, \ell}}{ \si_z^\ell (d_i)^j},
\end{equation}
where $p, a_{i, j, \ell}, d_i\in F[z]$, $\deg_z(a_{i, j, \ell})< \deg_z(d_i)$ and the $d_i$'s
are irreducible and monic polynomials such that no two of them are $\si_z$-equivalent. We call the sum $\sum_{\ell=0}^{e_{i, j}} \si_z^{-\ell}(a_{i, j, \ell})$
the \emph{$\si_z$-residue} of $f$ at $d_i$ of multiplicity $j$, denoted by $\res_{\si_z}(f, d_i, j)$.
Recently, the notion of $\si_z$-residues has been generalized to the case of rational functions over elliptic curves~\cite[Appendix B]{Dreyfus2018}.
The following lemma is a discrete analogue of Lemma~\ref{LM:dres} which
shows that $\si_z$-residues are the only obstructions for $\si_z$-summability in the field~$F(z)$.

\begin{lemma}\cite[Proposition 2.5]{ChenSinger2012}\label{LM:sres}
Let~$f=a/b\in F(z)$ be such that $a, b\in F[z]$ and $\gcd(a, b)=1$. Then $f$ is $\si_z$-summable in $F(z)$ if and only if
$\res_{\si_z}(f, d, j)=0$ for any irreducible factor $d$ of the denominator $b$ of any multiplicity $j\in \bN$.
\end{lemma}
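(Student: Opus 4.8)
The plan parallels Lemma~\ref{LM:dres}: reduce $f$, modulo $\si_z$-summable functions, to a canonical ``residue form'', and then show that a function already in residue form is $\si_z$-summable only if it is zero. Starting from the partial fraction decomposition~\eqref{EQ:pfdd} of $f$, I would apply the telescoping identity~\eqref{EQ:phi} with $\phi = \si_z$, $b = d_i^{\,j}$ and $m = \ell$ to each term $a_{i,j,\ell}/\si_z^{\ell}(d_i)^j$, rewriting it as $\si_z(g_{i,j,\ell}) - g_{i,j,\ell} + \si_z^{-\ell}(a_{i,j,\ell})/d_i^{\,j}$; since $\si_z$ preserves degrees, $\deg_z \si_z^{-\ell}(a_{i,j,\ell}) < \deg_z d_i$. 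Summing over $\ell$ for fixed $(i,j)$ collects exactly $\res_{\si_z}(f,d_i,j) = \sum_{\ell}\si_z^{-\ell}(a_{i,j,\ell})$ as the numerator over $d_i^{\,j}$, and the polynomial part $p(z)$ is $\si_z$-summable because $\si_z - 1$ sends a polynomial of degree $d\ge 1$ to one of degree exactly $d-1$, hence is surjective on $F[z]$. This yields
\[
f \;=\; \si_z(g) - g \;+\; \sum_{i=1}^{n}\sum_{j=1}^{m_i} \frac{\res_{\si_z}(f,d_i,j)}{d_i^{\,j}}
\]
for some $g\in F(z)$, where the $d_i$ are pairwise non-$\si_z$-equivalent irreducible monic polynomials and each numerator has degree $<\deg_z d_i$. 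The ``if'' direction is then immediate: if every $\si_z$-residue vanishes, the sum drops and $f=\si_z(g)-g$.

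For the ``only if'' direction, suppose $f$ is $\si_z$-summable. Subtracting the identity above, $\rho:=\sum_{i,j}\res_{\si_z}(f,d_i,j)/d_i^{\,j}$ is $\si_z$-summable, and since its denominators are pairwise distinct with numerators of smaller degree, it is enough to show that such a $\rho$ in reduced form can be $\si_z$-summable only if it is zero. I would prove this one $\si_z$-orbit at a time. Write $\Delta u := \si_z(u)-u$. For irreducible monic $d\in F[z]$ and $j\ge 1$, let $W_{[d],j}$ be the $F$-subspace of $F(z)$ of finite sums $\sum_{\ell\in\bZ}c_\ell/\si_z^{\ell}(d)^j$ with $\deg_z c_\ell<\deg_z d$. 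The partial fraction decomposition exhibits $F(z)$ as the direct sum of $F[z]$ with all the $W_{[d],j}$, each of which is $\Delta$-stable since $\si_z$ permutes each orbit and preserves degrees; so (as $\rho$ has zero polynomial part) $\si_z$-summability of $\rho$ amounts to each $W_{[d],j}$-component of $\rho$ lying in the image of $\Delta|_{W_{[d],j}}$. A direct computation gives $\Delta\bigl(\sum_\ell b_\ell/\si_z^{\ell}(d)^j\bigr)=\sum_\ell(\si_z(b_{\ell-1})-b_\ell)/\si_z^{\ell}(d)^j$, and reindexing one of the two sums shows $\sum_\ell\si_z^{-\ell}(\si_z(b_{\ell-1})-b_\ell)=0$; conversely, given $\sum_\ell c_\ell/\si_z^{\ell}(d)^j\in W_{[d],j}$ supported on $\ell\in[L,M]$ with $\sum_\ell\si_z^{-\ell}(c_\ell)=0$, the recursion $b_{L-1}=0$, $b_\ell=\si_z(b_{\ell-1})-c_\ell$ produces a preimage, and its support stays finite precisely because $b_{M+1}=-\si_z^{M+1}\bigl(\sum_\ell\si_z^{-\ell}(c_\ell)\bigr)=0$. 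Hence the image of $\Delta|_{W_{[d],j}}$ is exactly the kernel of the $F$-linear functional $\sum_\ell c_\ell/\si_z^{\ell}(d)^j\mapsto\sum_\ell\si_z^{-\ell}(c_\ell)$. The $W_{[d_i],j}$-component of $\rho$ is $\res_{\si_z}(f,d_i,j)/d_i^{\,j}$, on which this functional returns $\res_{\si_z}(f,d_i,j)$; so $\si_z$-summability of $\rho$ forces every $\res_{\si_z}(f,d_i,j)$ to vanish, whence $\rho=0$ and all $\si_z$-residues of $f$ are zero.

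The single genuine obstacle is this last finiteness step: one must rule out a summable $\rho$ in reduced form being certified by a rational function whose partial fraction expansion spreads over an unbounded stretch of one $\si_z$-orbit. The identity $b_{M+1}=-\si_z^{M+1}\bigl(\sum_\ell\si_z^{-\ell}(c_\ell)\bigr)$ is precisely what converts the residue condition ``the shifted numerators sum to zero'' into the statement that such a certificate automatically has finite support --- the purely discrete effect of unbounded shift-orbits, with no counterpart in the differential Lemma~\ref{LM:dres}, which contends with one pole at a time rather than a whole orbit. Everything else is routine bookkeeping with~\eqref{EQ:pfdd} and~\eqref{EQ:phi}.
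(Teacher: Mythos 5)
Your proof is correct; the paper itself states this lemma only by citation to \cite[Proposition~2.5]{ChenSinger2012}, and your argument is essentially the standard one given there: use the telescoping identity~\eqref{EQ:phi} to reduce $f$ modulo $\si_z(g)-g$ to a remainder supported on one representative per $\si_z$-orbit, then exploit that the orbit-and-multiplicity components of the partial fraction decomposition are $(\si_z-1)$-stable, so that the image of $\si_z-1$ on each component is exactly the kernel of the residue map. You also correctly isolate and settle the one delicate point (finite support of the certificate constructed by the recursion), so nothing further is needed.
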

By Abramov's reduction~\cite{Abramov1975, Abramov1995b}, we can decompose a rational function $f\in F(z)$ as
\[f = \Delta_z(g) + \sum_{i=1}^n \sum_{j=1}^{m_i}\frac{a_{i, j}}{b_i^j},\]
where $g\in F(z)$ and $a_{i, j}, b_i \in F[z]$ are such that $\deg_z(a_{i, j})<\deg_z(b_i)$ and the
$b_i$'s are irreducible and monic polynomials in distinct $\si_z$-orbits. By Lemma~\ref{LM:sres}, $h$ is $\si_z$-summable in $F(z)$ if and only if
$a_{i, j} = 0$ for all $i, j$ with $1\leq i \leq n$ and $1\leq j \leq m_i$.

Let~$q$ be a nonzero element of $F$ such that $q^m \neq 1$ for all nonzero $m\in \bZ$ and let $\tau_{q, z}$
be the $q$-shift operator with respect to $z$ defined by $\tau_{q, z}(f(z)) = f(qz)$. Since $q$ is nonzero, $\tau_{q, z}$
is an automorphism of $F(z)$ that fixes $F$.
A rational function $f\in F(z)$ is said to be \emph{$\tau_{q, z}$-summable} in $F(z)$ if $f = \tau_{q, z}(g)-g$ for some $g\in F(z)$.
For any $f\in F(z)$, we can uniquely decompose it into the form
\begin{equation}\label{EQ:pfdq}
f = c + zp_1 + \frac{p_2}{z^s} + \sum_{i=1}^n \sum_{j=1}^{m_i} \sum_{\ell=0}^{e_{i, j}} \frac{a_{i, j, \ell}}{ \tau_{q, z}^\ell (d_i)^j},
\end{equation}
where $c\in F, s, n, m_i, e_{i, j} \in \bN$ with $s\neq 0$,
and $p_1, p_2, a_{i, j, \ell}, d_i\in F[z]$ are such that $\deg_z(p_2)<s$, $\deg_z(a_{i, j, \ell})< \deg_z(d_i)$,
and $p_2$ is either zero or has nonzero constant term, i.e., $p_2(0)\neq 0$. Moreover, the $d_i$'s are irreducible and monic polynomials in distinct $\tau_{q, z}$-orbits and $z\nmid d_i$ for all $i$ with $1\leq i \leq n$. We call the constant $c$ the \emph{$\tau_{q, z}$-residue} of $f$ at infinity, denoted by $\res_{\tau_{q, z}}(f, \infty)$ and call   the sum $\sum_{\ell=0}^{e_{i, j}} \tau_{q, z}^{-\ell}(a_{i, j, \ell})$
the \emph{$\tau_{q, z}$-residue} of $f$ at $d_i$ of multiplicity $j$, denoted by $\res_{\tau_{q, z}}(f, d_i, j)$.
A $q$-analogue of Lemma~\ref{LM:sres} is as follows.

\begin{lemma}\cite[Proposition 2.10]{ChenSinger2012}\label{LM:qres}
Let~$f=a/b\in F(z)$ be such that $a, b\in F[z]$ and $\gcd(a, b)=1$. Then $f$ is $\tau_{q, z}$-summable in $F(z)$ if and only if $\res_{\tau_{q, z}}(f, \infty) =0$ and
$\res_{\tau_{q, z}}(f, d, j)=0$ for any irreducible factor $d$ of the denominator $b$ of any multiplicity $j\in \bN$.
\end{lemma}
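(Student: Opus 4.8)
The plan is to imitate the proof of the shift case, Lemma~\ref{LM:sres}, replacing the telescoping identity for $\si_z$ by the identity~\eqref{EQ:phi} applied with $\phi = \tau_{q, z}$, and using in addition that $q^m \neq 1$ for every nonzero $m \in \bZ$. I would work throughout with the canonical decomposition~\eqref{EQ:pfdq}.

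For the ``if'' direction I would treat the three types of blocks in~\eqref{EQ:pfdq} one at a time. The block $z p_1 = \sum_{k \geq 1}\alpha_k z^k$ is always $\tau_{q,z}$-summable, since $\tau_{q,z}\!\big(\tfrac{\alpha_k}{q^k-1}z^k\big) - \tfrac{\alpha_k}{q^k-1}z^k = \alpha_k z^k$ and $q^k \neq 1$; likewise each $1/z^j$ with $j \geq 1$ is $\tau_{q,z}$-summable because $q^{-j}\neq 1$, so the block $p_2/z^s$ is $\tau_{q,z}$-summable. For each orbit block $\sum_{\ell=0}^{e_{i,j}} a_{i,j,\ell}/\tau_{q,z}^\ell(d_i)^j$ I would apply~\eqref{EQ:phi} with $a = a_{i,j,\ell}$, $b = d_i^j$, $m = \ell$ to each term with $\ell \geq 1$, folding it back onto the representative denominator $d_i^j$; the block then equals a $\tau_{q,z}$-summable function plus $\big(\sum_\ell \tau_{q,z}^{-\ell}(a_{i,j,\ell})\big)/d_i^j = \res_{\tau_{q,z}}(f, d_i, j)/d_i^j$. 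Hence, if $\res_{\tau_{q,z}}(f, \infty) = 0$ and all $\res_{\tau_{q,z}}(f, d_i, j) = 0$, then $f$ is a finite sum of $\tau_{q,z}$-summable functions, so it is $\tau_{q,z}$-summable.

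For the ``only if'' direction I would write $f = \tau_{q,z}(g) - g$ and split $g = Q + s$ into its polynomial part $Q = \sum_k \beta_k z^k$ and its proper part $s$. Since $\tau_{q,z}(s) - s$ is again proper and $\tau_{q,z}(Q) - Q = \sum_k \beta_k(q^k-1)z^k$ has vanishing constant term, the constant term of the polynomial part of $f$ is $0$, i.e.\ $\res_{\tau_{q,z}}(f,\infty) = 0$. For the finite residues I would expand $s$ into partial fractions grouped by $\tau_{q,z}$-orbits, $s = \tilde p_2/z^{\tilde s} + \sum_{k}\sum_{j}\sum_{\ell} b_{k,j,\ell}/\tau_{q,z}^\ell(e_k)^j$ with the $e_k$ irreducible, monic, coprime to $z$, in distinct orbits; since $\tau_{q,z}$ shifts the orbit index by one, the numerator over $\tau_{q,z}^\ell(e_k)^j$ in $\tau_{q,z}(s) - s$ is $\tau_{q,z}(b_{k,j,\ell-1}) - b_{k,j,\ell}$ (out-of-range $b$'s read as $0$), which still has degree $< \deg_z e_k$ because $\tau_{q,z}$ preserves degrees, so this is already in canonical form. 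Writing $u_\ell = \tau_{q,z}^{-\ell}(b_{k,j,\ell})$, I get $\res_{\tau_{q,z}}(\tau_{q,z}(s)-s,\, e_k,\, j) = \sum_\ell(u_{\ell-1} - u_\ell) = 0$ by telescoping (finite range, vanishing boundary terms). As every irreducible factor of the denominator of $f$ that is not associate to $z$ lies in one of the orbits $[e_k]$, and the pole-at-$z$ block carries no residue obstruction, all residues of $f$ named in the statement vanish.

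The step most in need of care is the bookkeeping in the ``only if'' direction: one must verify that when $\tau_{q,z}(g)-g$ is reassembled from the pieces above it is genuinely in the canonical form~\eqref{EQ:pfdq} (distinct orbits stay distinct, the polynomial part contributes only to $c$ and to $zp_1$, and the numerator-degree and coprimality-to-$z$ conditions persist), so that the residues read off it are exactly those named in the statement, and that $\res_{\tau_{q,z}}(f,d,j)$ is genuinely orbit-invariant so that ``any irreducible factor $d$'' may be replaced by an orbit representative. Once this is settled, the remainder is a direct computation with~\eqref{EQ:phi} and the hypothesis $q^m \neq 1$.
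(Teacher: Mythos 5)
Your proof is correct, and it is essentially the argument behind the cited result: the paper itself gives no proof of Lemma~\ref{LM:qres} (it is quoted from \cite[Proposition~2.10]{ChenSinger2012}), and your reconstruction --- folding each orbit block onto its representative via~\eqref{EQ:phi}, disposing of the Laurent monomials $z^k$ and $1/z^j$ using $q^m\neq 1$, and telescoping the orbit numerators of $\tau_{q,z}(g)-g$ for the converse --- is exactly the standard route taken there. The bookkeeping points you flag (orbit-invariance of the residue up to an application of $\tau_{q,z}^m$, and that $\tau_{q,z}(g)-g$ reassembles into the canonical form~\eqref{EQ:pfdq}) are the right ones and go through as you describe.
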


By a $q$-analogue of Abramov's reduction~\cite{Abramov1995b}, we can decompose a rational function $f\in F(z)$ as
\[f = \Delta_{q, z}(g) + c +\sum_{i=1}^n \sum_{j=1}^{m_i}\frac{a_{i, j}}{b_i^j},\]
where~$g\in F(z), c\in F,$ and~$a_{i, j}, b_i \in F[z]$ are such
that $\deg_z(a_{i, j})<\deg_z(b_i)$ and
the $b_i$'s are irreducible and monic polynomials in distinct $\si_z$-orbits and $\gcd(z, b_i)=1$ for all $i$ with $1\leq i \leq n$. By Lemma~\ref{LM:qres}, $f$ is $\tau_{q,z}$-summable in $F(z)$ if and only if
$c=0$ and~$a_{i, j} = 0$ for all $i, j$ with $1\leq i \leq n$ and $1\leq j \leq m_i$.
\begin{remark}
Note that pseudo-residues are essentially different from residues in the
differential case, but not needed in the shift and $q$-shift cases.
\end{remark}

\section{Structure theorems}\label{SECT:structure}

In this section, we present structure theorems for rational WZ-pairs in terms of some special pairs.
Throughout this section, we will assume that $K$ is an algebraically closed field of characteristic zero and let $\partial_x \in \{D_x, \Delta_x, \Delta_{q, x}\}$ and $\partial_y \in \{D_y, \Delta_y, \Delta_{q, y}\}$.

We first consider the special case that $q\in K$ is a root of unity.
Assume that $m$ is the minimal positive integer such that~$q^m=1$. For any $f\in K(x, y)$,
it is easy to show that~$\tau_{q, y}(f) = f$ if and only if~$f\in K(x)(y^m)$.
Note that $K(x, y)$ is a finite algebraic extension of $K(x)(y^m)$ of degree $m$.
In the following theorem, we show that WZ-pairs in this special case
are of a very simple form.

\begin{theorem}\label{THM:specialq}
Let~$\partial_x \in \{D_x, \Delta_x, \Delta_{q, x}\} $ and~$f, g\in K(x, y)$ be such that $\partial_x(f) = \Delta_{q, y}(g)$.
Then there exist rational functions $h\in K(x, y)$ and $a, b\in K(x, y^m)$ such that $\partial_x(a)=0$ and
\[f  = \Delta_{q, y}(h) + a  \quad \text{and} \quad g  = \partial_x(h) + b.\]
Moreover, we have $a \in K(y^m)$ if $\partial_x\in \{D_x, \Delta_x\}$ and $a\in K(x^m, y^m)$ if $\partial_x = \Delta_{q, x}$.
\end{theorem}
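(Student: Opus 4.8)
\emph{Proof proposal.} The plan is to use the hypothesis on $q$ to turn the statement into a piece of Galois theory. Since $m$ is the exact order of $q$, the field $K$ contains all $m$-th roots of unity, so $y$ is a root of the separable polynomial $T^{m}-y^{m}\in K(x,y^m)[T]$, all of whose roots $q^{i}y$ with $0\le i<m$ lie in $K(x,y)$; as $[K(x,y):K(x,y^m)]=m=\deg(T^m-y^m)$, the extension $K(x,y)/K(x,y^m)$ is cyclic of degree $m$ with Galois group generated by $\tau_{q,y}\colon y\mapsto qy$. Consequently, for any $u\in K(x,y)$ the trace $\Tr(u):=\sum_{\ell=0}^{m-1}\tau_{q,y}^{\ell}(u)$ lies in $K(x,y^m)$, and by the additive form of Hilbert's Theorem~90 one has $\Tr(u)=0$ if and only if $u=\Delta_{q,y}(v)$ for some $v\in K(x,y)$; indeed, when $\Tr(u)=0$ the element $v=\tfrac1m\sum_{\ell=0}^{m-1}\ell\,\tau_{q,y}^{\ell}(u)$ can be checked directly to satisfy $\Delta_{q,y}(v)=u$, so no external citation is strictly needed.

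Granting this, I would first set $a:=\tfrac1m\Tr(f)\in K(x,y^m)$. Since $\Tr$ is $K(x,y^m)$-linear and acts as multiplication by $m$ on $K(x,y^m)$, it follows that $\Tr(f-a)=\Tr(f)-ma=0$, hence $f-a=\Delta_{q,y}(h)$ for some $h\in K(x,y)$, that is $f=\Delta_{q,y}(h)+a$. To see that $\partial_x(a)=0$, note that $\partial_x$ and $\tau_{q,y}$ act on disjoint variables and therefore commute; applying $\Tr$ to the hypothesis $\partial_x(f)=\Delta_{q,y}(g)$ gives $m\,\partial_x(a)=\partial_x(\Tr(f))=\Tr(\partial_x(f))=\Tr(\Delta_{q,y}(g))=\tau_{q,y}^{m}(g)-g=0$, the last equality by telescoping and $\tau_{q,y}^{m}=\mathrm{id}$. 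Substituting $f=\Delta_{q,y}(h)+a$ into the hypothesis then yields $\Delta_{q,y}(g)=\partial_x(f)=\Delta_{q,y}(\partial_x(h))$, so $b:=g-\partial_x(h)$ lies in the kernel of $\Delta_{q,y}$, which is $K(x,y^m)$; hence $g=\partial_x(h)+b$, proving the main assertion.

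For the final clause I would write $K(x,y^m)=E(x)$ with $E:=K(y^m)$ and use $\partial_x(a)=0$. If $\partial_x=D_x$, then $a$ is a $D_x$-constant of $E(x)$, so $a\in E=K(y^m)$. If $\partial_x=\Delta_x$, then $\si_x(a)=a$; such an $a$ cannot have a pole (its $\si_x$-orbit would produce infinitely many), so it is a polynomial in $x$ over $E$, and a shift-invariant polynomial is constant, whence $a\in K(y^m)$. If $\partial_x=\Delta_{q,x}$, then $\tau_{q,x}(a)=a$; since $E$ too contains all $m$-th roots of unity, the fixed field of $\tau_{q,x}$ in $E(x)$ is $E(x^m)=K(x^m,y^m)$ by the same argument as in the first paragraph, so $a\in K(x^m,y^m)$.

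The only real content is the first paragraph: once one notices that the root-of-unity hypothesis makes $K(x,y)/K(x,y^m)$ a cyclic extension, additive Hilbert~90 produces the certificate $h$ essentially for free, and --- in contrast with the general-$q$ situation --- the residue criteria of Section~\ref{SECT:res} play no role here. Everything after that is a short verification relying on the commutation of $\partial_x$ with $\tau_{q,y}$ and on the telescoping of the trace; the only point requiring some care is bookkeeping of which subfield each of $a$, $b$, $h$ belongs to.
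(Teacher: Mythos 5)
Your proof is correct, and it follows the same overall skeleton as the paper's (decompose $f=\Delta_{q,y}(h)+a$ with $a$ in the fixed field, deduce $\partial_x(a)=0$ from the hypothesis, then peel off $b=g-\partial_x(h)\in\ker\Delta_{q,y}=K(x,y^m)$), but you obtain the two key technical inputs by a genuinely different and more self-contained route. The paper gets the decomposition $f=\Delta_{q,y}(h)+a$, $a\in K(x)(y^m)$, together with the criterion ``$f$ is $\tau_{q,y}$-summable iff $a=0$'', by citing Lemma~2.4 of the Chen--Singer summability paper, and then derives $\partial_x(a)=0$ from that criterion applied to $\partial_x(f)$. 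You instead observe that, $q$ being a primitive $m$-th root of unity, $K(x,y)/K(x,y^m)$ is cyclic of degree $m$ generated by $\tau_{q,y}$, and you build the decomposition explicitly: $a=\tfrac1m\Tr(f)$ and $h=\tfrac1m\sum_{\ell=0}^{m-1}\ell\,\tau_{q,y}^{\ell}(f-a)$ (your identity $\Delta_{q,y}\bigl(\tfrac1m\sum_\ell \ell\,\tau_{q,y}^{\ell}(u)\bigr)=u-\tfrac1m\Tr(u)$ checks out and in fact proves both directions of the cited lemma at once, since $\Tr$ annihilates $\Delta_{q,y}$-images and is multiplication by $m$ on $K(x,y^m)$). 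Your derivation of $\partial_x(a)=0$ by applying $\Tr$ to $\partial_x(f)=\Delta_{q,y}(g)$ and telescoping is likewise cleaner than routing through the summability criterion. Two further points in your favour: you verify the ``Moreover'' clause (constants of $D_x$, $\si_x$, $\tau_{q,x}$ in $K(y^m)(x)$), which the paper's proof leaves implicit, and your closing remark that the residue machinery of Section~\ref{SECT:res} is not needed here is accurate --- this special case is pure Galois theory, which is exactly why the paper treats it separately before assuming $q$ is not a root of unity. The trade-off is only length: the citation makes the paper's proof three lines, while your argument is longer but requires no external input.
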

\begin{proof}
By Lemma 2.4 in~\cite{ChenSinger2014}, any rational function~$f\in K(x, y)$ can be decomposed as
\begin{equation}\label{rook}
f = \Delta_{q, y}(h)+ a, \quad \text{where~$h\in K(x, y)$ and~$a\in K(x)(y^m)$}.
\end{equation}
Moreover, $f$ is~$\tau_{q, y}$-summable in~$K(x, y)$ if and only if~$a=0$. Then
\[\partial_x(f) = \Delta_{q, y}(\partial_x(h))+ \partial_x(a).\]
Note that $\partial_x(a) \in K(x)(y^m)$, which implies that $\partial_x(a)=0$ because $\partial_x(f)$ is~$\tau_{q, y}$-summable in~$K(x, y)$.
Then $\Delta_{q, y}(g) = \Delta_{q, y}(\partial_x(h))$. So $g = \partial_x(h) + b$ for some $b\in K(x, y^m)$. This completes the proof.
\EOP
\end{proof}

From now on, we assume that $q$ is not a root of unity. We will investigate WZ-pairs in three different cases according to the choice of the pair $(\partial_x, \partial_y)$.


\subsection{The differential case} \label{SSECT:cont}

In the continuous setting, we consider WZ-pairs with respect to $(D_x, D_y)$, i.e., the
pairs of the form $(f, g)$ with $f, g\in K(x, y)$ satisfying $D_x(f)= D_y(g)$.
\begin{definition}\label{DEF:logd}
A WZ-pair $(f, g)$ with respect to $(D_x, D_y)$ is called a \emph{log-derivative} pair if there exists nonzero $h\in K(x, y)$ such that $f = D_y(h)/h$
and $g = D_x(h)/h$.
\end{definition}
The following theorem shows that any WZ-pair in the continuous case is a linear combination of exact and log-derivative pairs,
which was first proved by Christopher in~\cite{Christopher1999} and then extended to the multivariate case in~\cite{Zoladek1998, ChenThesis2011}.

\begin{theorem}\label{THM:cstructure}
Let~$f, g\in K(x, y)$ be such that $D_x(f) = D_y(g)$. Then there exist rational functions $a, b_1, \ldots, b_n \in K(x, y)$ and nonzero
constants~$c_1, \ldots, c_n\in K$ such that
\[f  = D_y(a) + \sum_{i =1}^n c_i \frac{D_y(b_i)}{b_i} \quad \text{and} \quad g  = D_x(a) + \sum_{i =1}^n c_i \frac{D_x(b_i)}{b_i}.\]
\end{theorem}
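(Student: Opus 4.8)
The plan is to study $f$ as a rational function of $y$ over the field $K(x)$, isolate its polar part, and use the relation $D_x(f)=D_y(g)$ to show that the residue of $f$ along each polar curve is a \emph{constant}; these constants will be the $c_i$. (In the language of the footnote above, $\omega=g\,dx+f\,dy$ is a closed rational $1$-form on an affine open set, and the assertion is that its class is, modulo exact forms, a $K$-combination of logarithmic-derivative forms $db_i/b_i$.) First I would apply the Ostrogradsky--Hermite reduction with respect to $y$ over $K(x)$, writing $f=D_y(a_0)+f_0$ with $a_0\in K(x,y)$ and $f_0=\sum_{i=1}^{n}\alpha_i/p_i$, where the $p_i\in K(x)[y]$ are distinct monic irreducible polynomials of positive degree in $y$ and $\deg_y\alpha_i<\deg_y p_i$; any part of $f$ that is polynomial in $y$ --- in particular any summand depending only on $x$ --- is absorbed into $D_y(a_0)$. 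Since mixed partials commute on $K(x,y)$, the pair $(f_0,g_0)$ with $g_0:=g-D_x(a_0)$ again satisfies $D_x(f_0)=D_y(g_0)$, so it suffices to handle $f_0$, which has only simple poles in $y$.

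The heart of the proof is to show that every residue of $f_0$ lies in $K$. Fix an index $i$ and a root $\beta\in\overline{K(x)}$ of $p_i$, extend $D_x$ to $\overline{K(x)}$ and then to the Laurent completion $\overline{K(x)}((t))$ in the unique way with $t=y-\beta$, and write $f_0=r\,t^{-1}+(\text{regular in }t)$ with $r=\res_{y=\beta}(f_0)\in\overline{K(x)}$. Since $\beta$ is independent of $y$, a term-by-term differentiation shows that the coefficient of $t^{-1}$ in $D_x(f_0)$ equals $D_x(r)$, whereas the coefficient of $t^{-1}$ in $D_y(g_0)=D_t(g_0)$ is always $0$. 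Hence $D_x(r)=0$, so $r\in K$ because $K$ is algebraically closed. Now $f_0\in K(x)(y)$ is fixed by $\mathrm{Gal}(\overline{K(x)}/K(x))$ acting trivially on $y$, so the residue of $f_0$ at $\sigma(\beta)$ equals $\sigma(r)=r$ for every $\sigma$; as the roots of the irreducible $p_i$ form a single Galois orbit, the residue of $f_0$ at every root of $p_i$ is one and the same constant $c_i\in K$. Writing $p_i=\prod_j(y-\beta_{ij})$ over $\overline{K(x)}$ we obtain $\alpha_i/p_i=c_i\sum_j(y-\beta_{ij})^{-1}=c_i\,D_y(p_i)/p_i$, and replacing $p_i$ by its primitive associate $P_i\in K[x,y]$ (which leaves $D_y(P_i)/P_i$ unchanged) gives
\[ f=D_y(a_0)+\sum_{i=1}^{n}c_i\,\frac{D_y(P_i)}{P_i}. \]

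It remains to put $g$ in the required form. Applying $D_x$ to the previous display and using $D_x\bigl(D_y(P_i)/P_i\bigr)=D_y\bigl(D_x(P_i)/P_i\bigr)$ (equality of the mixed partials of $P_i$) yields $D_x(f)=D_y\bigl(D_x(a_0)+\sum_i c_i D_x(P_i)/P_i\bigr)$, so $e:=g-D_x(a_0)-\sum_i c_i D_x(P_i)/P_i$ lies in $\ker D_y\cap K(x,y)=K(x)$. By partial fractions over the algebraically closed field $K$ we may write $e(x)=D_x(E(x))+\sum_k\mu_k\,(x-\gamma_k)^{-1}$ with $E\in K(x)$ and $\mu_k,\gamma_k\in K$; since $\mu_k(x-\gamma_k)^{-1}=\mu_k\,D_x(x-\gamma_k)/(x-\gamma_k)$ while $D_y(x-\gamma_k)/(x-\gamma_k)=0=D_y(E(x))$, setting $a:=a_0+E(x)$ and taking the family $\{b_i\}=\{P_i\}\cup\{x-\gamma_k\}$ with the constants $\{c_i\}\cup\{\mu_k\}$ completes the argument.

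I expect the genuine difficulty to be the residue-constancy step: making rigorous the extension of $D_x$ to $\overline{K(x)}$ and to the Laurent completion and its compatibility with term-by-term expansion, together with the Galois descent that collapses the residues along each $p_i$ to a single element of $K$. The rest is routine manipulation with the reductions recalled in Section~\ref{SECT:res}. One can instead argue entirely over $K(x)$, using $\res_{D_y}(f_0,p_i)$ and Lemma~\ref{LM:dres} and showing directly that $\alpha_i$ is a $K$-multiple of $\partial p_i/\partial y$; this repackages the same computation without the algebraic closure.
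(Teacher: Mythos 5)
Your proof is correct. Note that the paper does not actually prove Theorem~\ref{THM:cstructure}: it only cites Christopher's Theorem~2 and Theorem~4.4.3 of the author's thesis, so you are supplying an argument rather than shadowing one; what you give is essentially the standard proof from those sources, and it is the exact continuous analogue of the strategy the paper uses in the discrete and mixed cases (a reduction to a proper part with squarefree denominator, followed by a residue criterion). The two delicate points both check out. First, the residue-constancy step: after Hermite reduction $f_0$ has only simple poles in $y$; $D_x$ extends uniquely to $\overline{K(x)}$ (characteristic zero) and then to $\overline{K(x)}((t))$ compatibly with the expansion at $t=y-\beta$; the chain-rule term $-r\,D_x(t)\,t^{-2}$ produced by differentiating $r\,t^{-1}$ lands in degree $-2$ and does not disturb the coefficient of $t^{-1}$, which is therefore $D_x(r)$, while $D_y=D_t$ of any Laurent series has vanishing $t^{-1}$ coefficient; and a $D_x$-constant of $\overline{K(x)}$ is algebraic over the constant field $K$ of $K(x)$, hence lies in $K=\overline{K}$. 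Second, the Galois descent: since $f_0$ is fixed by $\mathrm{Gal}(\overline{K(x)}/K(x))$ acting trivially on $y$, uniqueness of the partial fraction decomposition gives $\res_{y=\sigma(\beta)}(f_0)=\sigma(r)=r$, so the residues along one irreducible $p_i$ coincide and $\alpha_i/p_i=c_i\,D_y(p_i)/p_i$. The remaining manipulations (commuting mixed partials, $\ker D_y\cap K(x,y)=K(x)$, and the one-variable decomposition of $e$ over the algebraically closed field $K$) are routine, and any vanishing $c_i$ or $\mu_k$ simply corresponds to an absent term, so the constants may be taken nonzero as the statement requires. Your closing remark is also apt: the same computation can be phrased entirely over $K(x)$ via $\res_{D_y}(f_0,p_i)$ and Lemma~\ref{LM:dres}, which is closer in spirit to the residue criteria of Section~\ref{SECT:res}.
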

\begin{proof}
The proof in the case when $K$ is the field of complex numbers can be found in~\cite[Theorem 2]{Christopher1999} and in the case when $K$ is any algebraically closed
field of characteristic zero can be found in~\cite[Theorem 4.4.3]{ChenThesis2011}.
\end{proof}

\begin{corollary}
The quotient space $\cP_{(D_x, D_y)}/\cE_{(D_x, D_y)}$ is spanned over $K$ by the set
\[ \{(f, g) + \cE_{(D_x, D_y)} \mid \text{$f, g \in K(x, y)$ such that $(f, g)$ is a log-derivative pair} \}.\]
\end{corollary}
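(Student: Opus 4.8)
The plan is to obtain this as an immediate reformulation of Theorem~\ref{THM:cstructure} in the language of the quotient space. Before doing that, I would first check that the proposed spanning set is actually a subset of $\cP_{(D_x, D_y)}/\cE_{(D_x, D_y)}$, i.e., that every log-derivative pair $(D_y(h)/h,\ D_x(h)/h)$ with $0\neq h\in K(x,y)$ is genuinely a WZ-pair with respect to $(D_x, D_y)$. This is a one-line computation: expanding with the quotient rule and using $D_xD_y = D_yD_x$, both $D_x\!\left(D_y(h)/h\right)$ and $D_y\!\left(D_x(h)/h\right)$ equal $D_xD_y(h)/h - D_x(h)D_y(h)/h^2$, so they coincide and $(D_y(h)/h,\ D_x(h)/h)\in \cP_{(D_x, D_y)}$.

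Next, take an arbitrary class $(f,g)+\cE_{(D_x, D_y)}$, so $D_x(f)=D_y(g)$. Apply Theorem~\ref{THM:cstructure} to produce $a, b_1,\dots,b_n\in K(x,y)$ (the $b_i$ being, up to the normalization in the cited proof, nonzero factors occurring in a partial-fraction-type decomposition, hence legitimately invertible) and nonzero constants $c_1,\dots,c_n\in K$ with $f = D_y(a) + \sum_{i=1}^n c_i\, D_y(b_i)/b_i$ and $g = D_x(a) + \sum_{i=1}^n c_i\, D_x(b_i)/b_i$. The pair $(D_y(a), D_x(a))$ is exact by definition, witnessed by $h=a$, hence represents the zero class. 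Therefore
\[
(f,g) + \cE_{(D_x, D_y)} \;=\; \sum_{i=1}^n c_i\,\bigl(D_y(b_i)/b_i,\ D_x(b_i)/b_i\bigr) + \cE_{(D_x, D_y)},
\]
which exhibits the given class as a $K$-linear combination of classes of log-derivative pairs. Since $(f,g)$ was arbitrary, such classes span $\cP_{(D_x, D_y)}/\cE_{(D_x, D_y)}$ over $K$.

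I do not expect any real obstacle: the corollary contains no new mathematical content beyond Theorem~\ref{THM:cstructure} and the definitions of exact and log-derivative pairs. The only points requiring a moment's care are (i) the preliminary verification that log-derivative pairs lie in $\cP_{(D_x, D_y)}$, so that the spanning set is well-defined, and (ii) the remark that the $b_i$ furnished by Theorem~\ref{THM:cstructure} may be chosen nonzero so that each $D_y(b_i)/b_i$ and $D_x(b_i)/b_i$ makes sense. Everything else is bookkeeping in the quotient.
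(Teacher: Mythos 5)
Your argument is correct and is exactly the intended derivation: the paper states this corollary without proof as an immediate consequence of Theorem~\ref{THM:cstructure}, and your reduction (exact part $(D_y(a),D_x(a))$ dies in the quotient, the remaining sum is a $K$-linear combination of log-derivative classes) together with the sanity check that log-derivative pairs satisfy $D_x(D_y(h)/h)=D_y(D_x(h)/h)$ is precisely the bookkeeping the author leaves implicit.
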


\begin{remark}
A differentiable function $h(x, y)$ is said to be hyperexponential over $\bC(x, y)$ if $D_x(h)= f h$ and $D_y(h)= gh$  for some $f, g\in \bC(x, y)$.
The above theorem enables us to obtain the multiplicative structure of hyperexponential functions, i.e., any hyperexponential function $h(x, y)$
can be written as
$h= \exp(a) \cdot \prod_{i=1}^n b_i^{c_i}$ for some $a, b_i\in \bC(x, y)$ and $c_i\in \bC$.
\end{remark}

\subsection{The ($q$)-shift case} \label{SSECT:dq}
In the discrete setting,  we consider WZ-pairs with respect to $(\partial_x, \partial_y)$ with $\partial_x \in  \{\Delta_x, \Delta_{q, x}\}$
and $\partial_y \in \{\Delta_y, \Delta_{q, y}\}$,  i.e., the
pairs of the form $(f, g)$ with $f, g\in K(x, y)$ satisfying $\partial_x(f)= \partial_y(g)$.

Let $\theta_x\in \{\si_x, \tau_{q, x}\}$ and $\theta_y\in \{\si_y, \tau_{q, y}\}$. For any nonzero $m\in \bZ$, $\theta_x^m$ is also an automorphism on $K(x, y)$ that fixes $K(y)$, i.e., for any $f\in K(x, y)$, $\theta_x^m(f) = f$ if and only if $f\in K(y)$.
The ring of polynomials in $\theta_x$ and $\theta_y$ over $K$ is denoted by
$K[\theta_x, \theta_y]$. For any $p = \sum_{i, j} c_{i, j}\theta_x^i\theta_y^j \in K[\theta_x, \theta_y]$ and $f\in K(x, y)$, we define the action~$p \bullet f = \sum_{i, j} c_{i, j} \theta_x^i(\theta_y^j(f))$. Then $K(x, y)$ can be viewed as a $K[\theta_x,  \theta_y]$-module. Let~$G = \langle \theta_x, \theta_y\rangle$ be the free abelian group generated by~$\theta_x$ and~$\theta_y$.
Let~$f\in K(x, y)$ and $H$ be a subgroup of~$G$. We call the set $\{c\theta(f)\mid c\in K\setminus\{0\}, \theta\in H\}$ the \emph{$H$-orbit} at~$f$, denoted by $[f]_H$.
Two elements~$f, g\in K(x, y)$ are said to be $H$-equivalent if~$[f]_H = [g]_H$, denoted by $f \sim_H g$. The relation
$\sim_H$ is an equivalence relation. A rational function $f\in K(x, y)$ is said to be \emph{$(\theta_x, \theta_y)$-invariant} if there exist $m, n\in \bZ$, not all zero, such that $\theta_x^m\theta_y^n(f) = f$.
All possible $(\theta_x, \theta_y)$-invariant rational functions have been completely characterized in~\cite{AbramovPetkovsek2002a, Ore1930, Sato1990, CFFJ2012, CCFFL2015}. We summarize the characterization as follows.

\begin{proposition}\label{THM:intlinear}
Let~$f\in K(x, y)$ be $(\theta_x, \theta_y)$-invariant, i.e., there exist $m, n\in \bZ$, not all zero, such that
$\theta_x^m\theta_y^n(f) = f$.  Set $\bar n = n/\gcd(m, n)$ and $\bar m = m/ \gcd(m ,n)$. Then
\begin{itemize}
\item[1.] if $\theta_x = \si_x$ and $\theta_y = \si_y$, then $f = g(\bar nx-\bar my)$ for some $g\in K(z)$;
\item[2.] if $\theta_x = \tau_{q, x}$, $\theta_y = \tau_{q, y}$, then $f = g(x^{\bar n}y^{-\bar m})$ for some $g\in K(z)$;
\item[3.] if $\theta_x = \si_x$, $\theta_y = \tau_{q, y}$, then $f \in K(x)$ if $m=0$, $f\in K(y)$ if $n=0$, and $f\in K$ if $mn\neq 0$.
\end{itemize}
\end{proposition}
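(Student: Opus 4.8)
The plan is to split into two regimes. When $\theta_x$ and $\theta_y$ are of the same type --- both ordinary shifts (case~1) or both $q$-shifts (case~2) --- I would make a unimodular change of variables that turns $\theta_x^m\theta_y^n$ into a pure ($q$-)shift in one new variable, and then apply a one-variable lemma; when $\theta_x$ and $\theta_y$ are of opposite types (case~3) no such change of variables exists, and I would argue instead through Zariski density of orbits. Put $d=\gcd(m,n)$ (so $d\geq1$), $\bar m=m/d$, $\bar n=n/d$ (so $\gcd(\bar m,\bar n)=1$), and fix $\alpha,\beta\in\bZ$ with $\alpha\bar m+\beta\bar n=1$. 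The one-variable lemma I need is: for a field $\widetilde K$ of characteristic zero and an indeterminate $v$, a rational function $h\in\widetilde K(v)$ with $h(v+e)=h(v)$ for some integer $e\neq0$ lies in $\widetilde K$, and one with $h(rv)=h(v)$ for some non-torsion $r\in\widetilde K^{\times}$ lies in $\widetilde K$. Both follow by looking at zeros and poles: in the shift case any pole of $h$ in $\AX^{1}$ would generate the infinite set $\{v_{0}-ke\mid k\in\bZ\}$ of poles, so $h$ is a polynomial in $v$, necessarily constant since it is fixed by a nonzero shift; in the $q$-shift case any zero or pole other than $0$ and $\infty$ would generate an infinite orbit under $v\mapsto r^{-1}v$ (because $r$ is non-torsion), so $h=cv^{i}$, and $r^{i}=1$ forces $i=0$.

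In case~1, the substitution $(x,y)\mapsto(u,v):=(\bar n x-\bar m y,\ \alpha x+\beta y)$ has matrix in $\mathrm{SL}_2(\bZ)$, hence is a $K$-algebra automorphism of $K[x,y]$ with $K(x,y)=K(u,v)$, and under this identification $\sigma_x^m\sigma_y^n$ becomes the shift $v\mapsto v+d$; so the hypothesis reads $f(u,v+d)=f(u,v)$ and the lemma over $\widetilde K=K(u)$ gives $f\in K(u)$, i.e.\ $f=g(\bar n x-\bar m y)$ for some $g\in K(z)$. Case~2 is the same with the Laurent-monomial substitution $(x,y)\mapsto(u,v):=(x^{\bar n}y^{-\bar m},\ x^{\alpha}y^{\beta})$, which is an automorphism of $K[x^{\pm1},y^{\pm1}]$ (its exponent matrix lies in $\mathrm{GL}_2(\bZ)$) and under which $\tau_{q,x}^m\tau_{q,y}^n$ becomes $v\mapsto q^{d}v$; since $q^{d}$ is not a root of unity ($d\neq0$), the lemma yields $f\in K(u)=K(x^{\bar n}y^{-\bar m})$. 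The degenerate parts of case~3 are immediate: $n=0$ forces $\sigma_x^m(f)=f$ with $m\neq0$, so $f\in K(y)$ by the lemma over $K(y)$; and $m=0$ forces $\tau_{q,y}^n(f)=f$ with $n\neq0$, so $f\in K(x)$.

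It remains to treat case~3 with $mn\neq0$, where $\theta_x^m\theta_y^n(f)=f$ reads $f(x+m,q^ny)=f(x,y)$; equivalently $f$ is constant on every orbit of the automorphism $\Phi\colon(x,y)\mapsto(x+m,q^ny)$ of $\AX^2_K$. I would first observe that the polar locus $Z$ of $f$ is a proper Zariski-closed, $\Phi$-stable subset of $\AX^2_K$ --- the denominators of $f$ and $f\circ\Phi$ agree up to a unit of $K[x,y]$, and the latter denominator is the $\Phi$-pullback of the former --- so every orbit lies either inside $Z$ or entirely off it. Next I would prove the density statement: for $p=(x_0,y_0)$ with $x_0\in K$ and $y_0\in K^{\times}$, the orbit $O(p)=\{(x_0+km,\,q^{kn}y_0)\mid k\in\bZ\}$ is Zariski-dense in $\AX^2$. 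Indeed, were a nonzero $P=\sum_j c_j(x)y^j\in K[x,y]$ to vanish on $O(p)$, then $\sum_j\bigl(c_j(x_0+km)\,y_0^{\,j}\bigr)(q^{nj})^{k}=0$ for all $k\in\bZ$; since $q$ is not a root of unity the scalars $q^{nj}$ are pairwise distinct, so linear independence of the functions $k\mapsto k^{a}r^{k}$ forces each polynomial $k\mapsto c_j(x_0+km)\,y_0^{\,j}$ to vanish identically, and hence $c_j\equiv0$ (using $m\neq0$ and $y_0\neq0$) --- a contradiction. Since $Z$ is proper there is such a point $p$ outside $Z$; by $\Phi$-stability of $Z$ the whole orbit $O(p)$ then avoids $Z$, so $f$ is defined along $O(p)$ and constant there with value $f(p)$, and density forces $f\equiv f(p)\in K$.

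The step I expect to be the main obstacle is exactly this density statement for the mixed operator in case~3: it is the one place where the hypothesis that $q$ is not a root of unity is used essentially (through the distinctness of the $q^{nj}$ and the linear independence of generalized power functions), and it is also where one must be somewhat careful in choosing a base point off the polar locus --- which is precisely why it is worthwhile to isolate the $\Phi$-stability of $Z$ at the start.
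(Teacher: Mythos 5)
Your proof is correct and complete, but it is worth noting that the paper does not actually prove this proposition: it is stated as a summary of results cited from Ore, Sato, Abramov--Petkov\v{s}ek and the papers of Chen et al., where the characterization of $(\theta_x,\theta_y)$-invariant rational functions arises as a byproduct of the Ore--Sato structure theory for multivariate ($q$-)hypergeometric terms. Your argument is therefore a genuinely independent, self-contained route. In the two pure cases the $\mathrm{SL}_2(\bZ)$ (resp.\ monomial $\mathrm{GL}_2(\bZ)$) change of variables correctly diagonalizes $\theta_x^m\theta_y^n$ into $v\mapsto v+d$ (resp.\ $v\mapsto q^d v$), and the univariate lemma is standard; in the mixed case your Zariski-density argument is sound and correctly isolates where the standing hypothesis that $q$ is not a root of unity is used (distinctness of the $q^{nj}$, hence linear independence of the sequences $k\mapsto k^a r^k$ over a field of characteristic zero). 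Two small points you should make explicit: the base point must be chosen off $Z\cup\{y=0\}$, and the existence of such a $K$-point uses that $K$ is infinite, which is guaranteed by the section's standing assumption that $K$ is algebraically closed; and the conclusion ``$f\equiv f(p)$'' should be phrased as the vanishing of the polynomial $a-f(p)b$ (with $f=a/b$ in lowest terms) on the dense orbit, since $f$ itself is only defined off $Z$. Both are routine. What your approach buys is transparency about the role of each hypothesis and independence from the hypergeometric-term literature; what the paper's citation buys is brevity and the placement of the statement inside the general Ore--Sato framework.
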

%

We introduce a discrete analogue of the log-derivative pairs.
\begin{definition}\label{DEF:2WZp}
A WZ-pair $(f, g)$ with respect to $(\partial_x, \partial_y)$ is called a \emph{cyclic} pair if there exists a $(\theta_x, \theta_y)$-invariant $h\in K(x, y)$
such that
\[f = \frac{\theta_x^s - 1}{\theta_x-1} \bullet h \quad \text{and} \quad g = \frac{\theta_y^t - 1}{\theta_y-1} \bullet h,
\]
where $s, t\in \bZ$ are not all zero satisfying that
$\theta_x^s(h)= \theta_y^t(h)$.
\end{definition}
In the above definition, we may always assume that $s\geq 0$.
Note that for any $n\in \bZ$ we have
\[\frac{\theta_y^n - 1}{\theta_y-1} = \left\{
                                                                       \begin{array}{ll}
                                                                        \sum_{j=0}^{n-1}\theta_y^j, & \hbox{$n\geq 0$;} \\
                                                                         -\sum_{j=1}^{-n}\theta_y^{-j},& \hbox{$n<0$.}
                                                                       \end{array}
                                                                     \right.\]

\begin{example} Let~$a\in K(y)$ and $b\in K(x)$. Then both $(a, 0)$ and $(0, b)$ are cyclic  by taking $h =a, s=1, t=0$  and $h=b, s=0, t=1$, respectively.
Let $p = 2x + 3y$. Then the pair $(f, g)$ with
\[f = \frac{1}{p} + \frac{1}{\si_x(p)} + \frac{1}{\si_x^2(p)} \quad \text{and} \quad g = \frac{1}{p} + \frac{1}{\si_y(p)}\]
is a cyclic WZ-pair with respect to $(\Delta_x, \Delta_y)$.
\end{example}

Let~$V_0 = K(x)[y]$ and $V_m$ be
the set of all rational functions of the form $\sum_{i=1}^I {a_i}/{b_i^m}$,
where~$m\in \bZ_+, a_i, b_i, \in K(x)[y]$, $\deg_y(a_i) < \deg_y(b_i)$ and the~$b_i$'s are distinct
irreducible polynomials in the ring $K(x)[y]$. By definition, the set $V_m$ forms a subspace of
$K(x, y)$ as a vector spaces over~$K(x)$. By the irreducible partial fraction decomposition,
any $f\in K(x, y)$ can be uniquely decomposed
into $f = f_0 + f_1 + \cdots + f_n$ with~$f_i \in V_i$ and so~$K(x, y) = \bigoplus_{i=0}^\infty V_i$. The following lemma shows that
the space $V_m$ is invariant under certain shift operators.

\begin{lemma} \label{LM:inv}
Let~$f\in V_m$ and~$P\in K(x)[\theta_x, \theta_y]$. Then $P(f)\in V_m$.
\end{lemma}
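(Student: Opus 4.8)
The plan is to reduce to the action of a single monomial $\theta_x^i \theta_y^j$, since $P$ is a $K(x)$-linear combination of such monomials and $V_m$ is a $K(x)$-subspace. So it suffices to show that $\theta_x^i\theta_y^j(f) \in V_m$ for any $i, j \in \bZ$ and any $f \in V_m$. Writing $f = \sum_{k=1}^I a_k/b_k^m$ as in the definition of $V_m$, with $a_k, b_k \in K(x)[y]$, $\deg_y(a_k) < \deg_y(b_k)$ and the $b_k$ distinct irreducible in $K(x)[y]$, linearity again lets me treat a single term $a/b^m$. Thus the whole lemma comes down to checking that $\theta_x^i\theta_y^j(a/b^m)$ lies in $V_m$, i.e. can be rewritten in the prescribed normal form with the same exponent $m$.

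Next I would analyze what $\theta := \theta_x^i\theta_y^j$ does to the term $a/b^m$. Since $\theta$ is a $K$-algebra automorphism of $K(x, y)$ (a composite of shift/$q$-shift operators), we have $\theta(a/b^m) = \theta(a)/\theta(b)^m$. The key point is that $\theta$ maps $K(x)[y]$ into $K(x)[y]$: indeed $\theta_y$ (whether $\si_y$ or $\tau_{q,y}$) sends a polynomial in $y$ to a polynomial in $y$ of the same $y$-degree, with coefficients still in $K(x)$; and $\theta_x$ (whether $\si_x$ or $\tau_{q,x}$) only affects the $K(x)$-coefficients, again keeping us in $K(x)[y]$ and preserving $y$-degree. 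Hence $\theta(b)$ is again a polynomial in $K(x)[y]$ of the same $y$-degree as $b$, and since $\theta$ is an automorphism it preserves irreducibility over $K(x)$; likewise $\deg_y(\theta(a)) = \deg_y(a) < \deg_y(b) = \deg_y(\theta(b))$. To land exactly in the normal form of $V_m$ I then factor out the leading coefficient (in $y$) of $\theta(b)$: write $\theta(b) = c \tilde b$ with $c \in K(x)^\times$ and $\tilde b \in K(x)[y]$ monic irreducible, so that $\theta(a)/\theta(b)^m = (\theta(a)/c^m)/\tilde b^m$ with numerator of strictly smaller $y$-degree. This exhibits $\theta(a/b^m) \in V_m$. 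Summing back over $k$ and then over the monomials of $P$ gives $P(f) \in V_m$ because $V_m$ is closed under $K(x)$-linear combinations (collecting terms with $\sim$-equivalent irreducible denominators if necessary, which only merges numerators without changing the exponent $m$).

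The only genuine content — and the step I would be most careful about — is the claim that the shift operators preserve $K(x)[y]$ and preserve $y$-degree and irreducibility over $K(x)$. For $m \geq 2$ this is all there is; for $m = 0$ the statement $P(V_0) \subseteq V_0$ is the observation that $\theta_x, \theta_y$ map $K(x)[y]$ to itself, which is immediate. For $m = 1$ one must note that distinct irreducible $b_k$ may become $\theta_x^i\theta_y^j$-equivalent after applying $P$, but merging the corresponding partial fractions is harmless: it produces a single term $a/b$ with $\deg_y a < \deg_y b$ after reducing, still in $V_1$. So no real obstacle arises; the proof is essentially a bookkeeping argument built on the fact that $\theta_x$ and $\theta_y$ are automorphisms fixing $K$ that act on $K(x)[y]$ degree-preservingly in $y$.
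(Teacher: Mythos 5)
Your proof is correct and follows essentially the same route as the paper's: reduce by $K(x)$-linearity to a single monomial $\theta_x^i\theta_y^j$ acting on a single simple fraction $a/b^m$, observe that these automorphisms preserve $K(x)[y]$, irreducibility, and $y$-degree, and then merge any coinciding denominators by adding numerators. The normalization to monic denominators and the separate treatment of $V_0$ are harmless extra bookkeeping not present in (and not needed for) the paper's argument, since the definition of $V_m$ does not require the $b_i$ to be monic.
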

\begin{proof}
Let~$f = \sum_{i=1}^I a_i/b_i^m$ and~$P= \sum_{i, j} p_{i, j} \theta_x^i \theta_y^j$.
For any $\theta=\theta_x^i\theta_y^j$ with~$i, j, k \in \bZ$, $\theta(b_i)$ is still
irreducible and~$\deg_y(\theta(a_i))< \deg_y(\theta(b_i))$. Then all of the simple fractions
${p_{i, j}\theta_x^i\theta_y^j(a_i)}/{\theta_x^i\theta_y^j(b_i)^n}$ appearing in $P(f)$ are
proper in~$y$ and have irreducible denominators. If some of denominators are the same,
we can simplify them by adding the numerators to get a simple fraction. After this simplification,
we see that $P(f)$ can be written in the same form as $f$, so it is in~$V_m$. \EOP
\end{proof}

\begin{lemma}\label{LM:constant}
Let~$p$ be a monic polynomial in $K(x)[y]$. If $\theta_x^{m}(p) = c\theta_y^n(p)$ for some $c\in K(x)$ and $m, n\in \bZ$ with $m, n$ being not both zero, then $c\in K$.
\end{lemma}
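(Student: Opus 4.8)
The plan is to prove the statement by comparing leading coefficients with respect to $y$ on the two sides of the identity $\theta_x^m(p) = c\,\theta_y^n(p)$. Write $d = \deg_y(p)$. Since $p$ is monic, the coefficient of $y^d$ in $p$ equals $1$. The operator $\theta_x$ (being either $\si_x$ or $\tau_{q,x}$) acts only on the variable $x$ and therefore maps $K(x)[y]$ to itself while fixing both the $y$-degree and the leading coefficient in $y$ of any element. Consequently $\theta_x^m(p)$ is again a polynomial in $y$ of degree $d$ whose coefficient of $y^d$ is $1$; in particular it is nonzero, so $c$ is nonzero.

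Next I would track the effect of $\theta_y^n$ on the top-degree term $y^d$. If $\theta_y = \si_y$, then $\theta_y^n(p) = p(x, y+n)$ has leading term $(y+n)^d = y^d + \cdots$, so its coefficient of $y^d$ is $1$. If $\theta_y = \tau_{q,y}$, then $\theta_y^n(p) = p(x, q^n y)$ has leading term $q^{nd}y^d$, so its coefficient of $y^d$ is $q^{nd}$. In either case the leading coefficient in $y$ of $\theta_y^n(p)$ is an element $\lambda$ of $K$ (namely $\lambda = 1$ or $\lambda = q^{nd}$), and $\theta_y^n(p)$ still has $y$-degree $d$.

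Comparing the coefficients of $y^d$ in $\theta_x^m(p) = c\,\theta_y^n(p)$ now yields $1 = c\lambda$, hence $c = \lambda^{-1} \in K$, which is the desired conclusion. I do not expect any serious obstacle here: the only point worth a word is the degenerate case $d = 0$, in which $p = 1$ and the identity directly forces $c = 1$ (consistent with $\lambda = 1$ above). Note also that the hypothesis that $q$ is not a root of unity plays no role in this particular lemma; what matters is only the monicity of $p$ together with the fact that $\theta_x$ leaves $y$-degrees and $y$-leading coefficients untouched.
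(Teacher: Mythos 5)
Your proof is correct and follows essentially the same route as the paper: write $p=\sum_{i=0}^d p_iy^i$ with $p_d=1$ and compare leading coefficients in $y$, getting $c=1$ in the shift case and $c=q^{-nd}$ in the $q$-shift case, hence $c\in K$. (One tiny imprecision: $\theta_x$ does not fix the $y$-leading coefficient of an arbitrary element of $K(x)[y]$ --- it maps it to its image under $\theta_x$ --- but since $p$ is monic that coefficient is $1\in K$, which is fixed, so the argument is unaffected.)
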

\begin{proof}
Write $p = \sum_{i=0}^d p_i y^i$ with $p_i\in K(x)$ and $p_d =1$. Then
\[\theta_x^{m}(p) =  \sum_{i=0}^d \theta_x^m(p_i) y^i  = c \sum_{i=0}^d p_i \theta_y^n(y^i)= c\theta_y^n(p).\]
Comparing the leading coefficients in $y$ yields $c=1$ if $\theta_y = \si_y$ and $c = q^{-nd}$ if $\theta_y = \tau_{q, y}$. Thus, $c\in K$ because $q\in K$.
\EOP
\end{proof}

\begin{lemma}\label{LM:cyclic}
Let~$f\in K(x, y)$ be a rational function of the form
\[ f = \frac{a_0}{b^m} + \frac{a_1}{\theta_x(b^m)} + \cdots + \frac{a_n}{\theta_x^n(b^m)},\]
where $m\in \bZ_+, n\in \bN, a_0, a_1, \ldots, a_n \in K(x)[y]$ with $a_n\neq 0$ and $b\in K(x)[y]$ are such that $\deg_y(a_i)<\deg_y(b)$ and $b$ is an
irreducible and monic polynomial in $K(x)[y]$ such that $\theta_x^i(b)$
and $\theta_x^j(b)$ are not $\theta_y$-equivalent for all $i, j \in \{0, 1, \ldots, n\}$ with $i\neq j$.
If $\theta_x(f)- f = \theta_y(g) - g$ for some $g\in K(x, y)$, then $(f, g)$ is cyclic.
\end{lemma}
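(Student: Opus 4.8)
The plan is to pin down the exact shape of $f$ by feeding $r:=\theta_x(f)-f=\theta_y(g)-g$ into the $\theta_y$-summability criterion (Lemma~\ref{LM:sres} if $\theta_y=\si_y$, Lemma~\ref{LM:qres} if $\theta_y=\tau_{q,y}$), and then to recover $g$ from the WZ equation. Writing $b_i:=\theta_x^i(b)$, each $b_i$ is again irreducible and monic in $K(x)[y]$ with $\deg_y b_i=\deg_y b$, and a direct computation gives
\[
r \;=\; -\frac{a_0}{b_0^m}\;+\;\sum_{i=1}^{n}\frac{\theta_x(a_{i-1})-a_i}{b_i^m}\;+\;\frac{\theta_x(a_n)}{b_{n+1}^m},
\]
all summands proper in $y$; in particular $r$ has no polynomial part in $y$, so in the $q$-shift case its $\theta_y$-residue at infinity vanishes (when $b\neq y$) and Lemma~\ref{LM:qres} is applied exactly as Lemma~\ref{LM:sres}. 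The hypothesis gives that $b_0,\dots,b_n$ lie in pairwise distinct $\theta_y$-orbits, while a priori nothing is known about $b_{n+1}$.

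First I would establish the periodicity $\theta_x^{n+1}(b)\sim_{\theta_y}b$. If $b_{n+1}$ were $\theta_y$-equivalent to none of $b_0,\dots,b_n$, then $\res_{\theta_y}(r,b_{n+1},m)=\theta_x(a_n)\neq0$ (since $a_n\neq0$ and $\theta_x$ is an automorphism), contradicting $\theta_y$-summability of $r$. Hence $b_{n+1}\sim_{\theta_y}b_j$ for some $j\in\{0,\dots,n\}$, i.e.\ $\theta_x^{\,n+1-j}(b)\sim_{\theta_y}b$; were $n+1-j\le n$, two of $b_0,\dots,b_n$ would share an orbit, so $j=0$ and $\theta_x^{n+1}(b)=c\,\theta_y^{t}(b)$ for some $c\in K(x)$, $t\in\bZ$, with $c\in K$ by Lemma~\ref{LM:constant}. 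For $n\ge1$ one checks $t\neq0$ (otherwise $b\in K(y)$ and all $b_i$ coincide); the configuration $b=y$, $\theta_y=\tau_{q,y}$, which also collapses all $b_i$, can occur only when $n=0$ and is settled by a short direct argument. Thus the $\theta_y$-orbits occurring in $r$ are exactly $\{b_0,b_{n+1}\}$ together with the singletons $\{b_i\}$, $1\le i\le n$.

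Next I would read off the remaining residue conditions. Vanishing of $\res_{\theta_y}(r,b_i,m)$ for $1\le i\le n$ forces $a_i=\theta_x(a_{i-1})$, hence $a_i=\theta_x^i(a_0)$ for all $0\le i\le n$ (so $a_0\neq0$ automatically). Since $\theta_x(a_n)=\theta_x^{n+1}(a_0)$ and $b_{n+1}^m=c^m\theta_y^{t}(b_0^m)$, the last term of $r$ equals $c^{-m}\theta_x^{n+1}(a_0)/\theta_y^{t}(b_0^m)$; placing it in canonical form with orbit representative $b_0$ and imposing $\res_{\theta_y}(r,b_0,m)=0$ yields $\theta_y^{t}(a_0)=c^{-m}\theta_x^{n+1}(a_0)$. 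Now set $h:=a_0/b^m\in K(x,y)\setminus\{0\}$; combining $\theta_x^{n+1}(b)=c\,\theta_y^{t}(b)$ with this identity gives $\theta_x^{n+1}(h)=\theta_y^{t}(h)$, so $h$ is $(\theta_x,\theta_y)$-invariant and $(s,t)=(n+1,t)$ (not both zero) realizes the required relation; moreover $f=\sum_{i=0}^{n}\theta_x^i(h)=\tfrac{\theta_x^{n+1}-1}{\theta_x-1}\bullet h$.

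Finally I would recover $g$. From $\theta_x(f)-f=(\theta_x^{n+1}-1)\bullet h=(\theta_y^{t}-1)\bullet h=(\theta_y-1)\bullet\bigl(\tfrac{\theta_y^{t}-1}{\theta_y-1}\bullet h\bigr)$ and $\theta_x(f)-f=\theta_y(g)-g$, the difference $g-\tfrac{\theta_y^{t}-1}{\theta_y-1}\bullet h$ is $\theta_y$-fixed, hence lies in $K(x)$; absorbing this summand (which on its own is a cyclic pair $(0,c(x))$ of the type in the example after Definition~\ref{DEF:2WZp}) shows $(f,g)$ is cyclic, witnessed by $h$. I expect the genuine difficulty to lie in the periodicity step: using the distinctness hypothesis together with the summability criterion to force $b_{n+1}$ to wrap around precisely to $b_0$ and to pin down $c\in K$, and — in the $q$-shift case — the monic renormalization needed to evaluate $\res_{\theta_y}(r,b_0,m)$ with the correct constant $c^{-m}$, along with the bookkeeping for the degenerate cases ($b$ free of $x$, or $b=y$). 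Once the period is known to be $n+1$, the remaining residue identities and the construction of $h$ are routine.
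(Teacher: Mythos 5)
Your main line of argument is the paper's proof almost verbatim: compute $\theta_x(f)-f$, use the $\theta_y$-residue criteria to force $\theta_x^{n+1}(b^m)$ into the $\theta_y$-orbit of $b^m$ itself (the cases $1\le j\le n$ being excluded by the distinctness hypothesis), get $c\in K$ from Lemma~\ref{LM:constant}, extract $a_i=\theta_x^i(a_0)$ and $\theta_x^{n+1}(a_0)=c^m\theta_y^{t}(a_0)$ from the remaining residues via~\eqref{EQ:phi}, and take $h=a_0/b^m$. You are in fact a bit more careful than the paper at the end, where you account for the discrepancy between the given $g$ and the constructed certificate $\frac{\theta_y^t-1}{\theta_y-1}\bullet h$ (an element of $K(x)$, which strictly speaking makes $(f,g)$ a sum of two cyclic pairs rather than a single one --- the paper elides this point as well).

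The one place where your proposal goes wrong is the degenerate case $\theta_y=\tau_{q,y}$, $y\mid b$ (so $b=y$ and necessarily $n=0$), which you claim is ``settled by a short direct argument.'' It cannot be: fractions $a/y^{j}$ with $j\ge 1$ are always $\tau_{q,y}$-summable, so the residue criteria put no constraint whatsoever on $a_0$, and the conclusion of the lemma genuinely fails there. For instance $f=x/y$ and $g=\frac{q}{(1-q)y}$ satisfy $\Delta_x(f)=\Delta_{q,y}(g)$ and all the stated hypotheses, yet by Proposition~\ref{THM:intlinear}(3) every cyclic pair for $(\si_x,\tau_{q,y})$ has first component in $K(y)$; similarly $f=(x^2+1)/y$ fails in the $(\tau_{q,x},\tau_{q,y})$ setting, since there a cyclic $f$ must lie in $K(x^{\bar n}y^{-\bar m})$ for a single monomial. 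This is a defect of the lemma as literally stated that the paper's own proof silently shares; it is harmless in Theorem~\ref{THM:dstructure} because the $q$-analogue of Abramov's reduction produces denominators coprime to $y$. The honest fix is to add the hypothesis $y\nmid b$ when $\theta_y=\tau_{q,y}$ (or to note that the case is vacuous in the application), not to claim it can be argued away directly.
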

\begin{proof}
By a direct calculation, we have
\[
\theta_x(f)- f  = \frac{\theta_x(a_n)}{\theta_x^{n+1}(b^m)} - \frac{a_0}{b^m} + \frac{\theta_x(a_0)-a_1}{\theta_x(b^m)} + \cdots + \frac{\theta_x(a_{n-1})-a_n}{\theta_x^n(b^m)}.
\]
If $\theta_x(f) - f = \theta_y(g) - g$ for some $g\in K(x, y)$, then all of the $\theta_y$-residues at distinct $\theta_y$-orbits of $\theta_x(f) - f$
are zero by residue criteria in Section~\ref{SECT:res}. Since $b^m, \theta_x(b^m), \ldots, \theta_x^{n}(b^m)$ are in distinct $\theta_y$-orbits, $\theta_x^{n+1}(b^m)$
must be $\theta_y$-equivalent to one of them. Otherwise, we get
\[
a_0 = 0, \quad \theta_x(a_0)-a_1 = 0, \quad \ldots, \quad \theta_x(a_{n-1})-a_n = 0,\quad  \text{and}\quad  \theta_x(a_n) = 0.
\]
Since $\theta_x$ is an automorphism on $K(x, y)$, we have $a_0 = a_1 = \cdots = a_n =0$, which contradicts the assumption that $a_n\neq 0$.
If $\theta_x^{n+1}(b^m)$ is $\theta_y$-equivalent to $\theta_x^i(b^m)$ for some $0<i\leq n$, so is
$\theta_x^{n+1-i}(b^m)$, which contradicts the assumption. Thus,  $\theta_x^{n+1}(b^m) = c\theta_y^t(b^m)$ for some $c\in K(x)\setminus\{0\}$ and $t\in \bZ$.
By Lemma~\ref{LM:constant}, we have $c\in K\setminus\{0\}$. A direct calculation leads to
\begin{align*}
  \theta_x(f)- f &  {=} \frac{\theta_x(a_{n})}{\theta_x^{n+1}(b^m)} {-} \frac{a_0}{b^m} + \sum_{i=1}^{n} \frac{\theta_x(a_{i-1})-a_{i}}{\theta_x^i(b^m)} {=} \frac{\theta_x(a_{n})}{c\theta_y^{t}(b^m)} {-} \frac{a_0}{b^m} + \sum_{i=1}^{n} \frac{\theta_x(a_{i-1})-a_{i}}{\theta_x^i(b^m)}\\
    &  {=} \frac{\theta_y^{-t}\theta_x(a_{n}/c)-a_0}{b^m}  + \sum_{i=1}^{n} \frac{\theta_x(a_{i-1})-a_{i}}{\theta_x^i(b^m)} + \theta_y(u) - u
\end{align*}
for some $u\in K(x, y)$ using the formula~\eqref{EQ:phi}. By the residue criteria,  we then get $a_0 = \theta_y^{-t}\theta_x(a_{n}/c), a_1 = \theta_x(a_0),  \ldots,$ and~$a_{n} = \theta_x(a_{n-1})$.
This implies that $\theta_x^{n+1}(a_0) = c \theta_y^t(a_0)$ and $a_i = \theta_x^i(a_0)$ for $i\in \{1, \ldots, n\}$. So $f = \frac{\theta_x^{n+1}-1}{\theta_x-1} \bullet h$
with $h= a_0/b^m$, which leads to
\[ \theta_x(f)- f = \theta_x^{n+1}(h)- h = \theta_y^t(h) - h = \theta_y(g) - g \quad \text{with}\quad  g = \frac{\theta_y^t-1}{\theta_y-1} \bullet h. \]
Thus, $(f, g)$ is a cyclic WZ-pair. \EOP

\end{proof}


The following theorem is a discrete analogue of Theorem~\ref{THM:cstructure}.

\begin{theorem}\label{THM:dstructure}
Let $f, g\in K(x, y)$ be such that $\partial_x(f)  = \partial_y(g)$. Then there exist rational functions $a, b_1, \ldots, b_n \in K(x, y)$ such that
\[f  = \partial_y(a) + \sum_{i =1}^n \frac{\theta_x^{s_i}-1}{\theta_x - 1}\bullet b_i \quad \text{and} \quad g  = \partial_x(a) + \sum_{i =1}^n \frac{\theta_y^{t_i}-1}{\theta_y - 1} \bullet b_i ,\]
where for each $i\in \{1, \ldots, n\}$ we have $\theta_x^{s_i}(b_i) = \theta_y^{t_i}(b_i)$ for some $s_i\in \bN$ and $t_i\in \bZ$ with $s_i, t_i$ not all zero.
\end{theorem}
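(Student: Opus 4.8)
The plan is to adapt Christopher's argument for the continuous case (Theorem~\ref{THM:cstructure}), replacing the Ostrogradsky--Hermite reduction and $D_y$-residues by Abramov's reduction and the $\theta_y$-residue criteria of Section~\ref{SECT:res}. Write $\theta_x\in\{\si_x,\tau_{q,x}\}$ and $\theta_y\in\{\si_y,\tau_{q,y}\}$, so that $\partial_x=\theta_x-1$ and $\partial_y=\theta_y-1$ act via $\bullet$. First I would view $f$ as a rational function of $y$ over $K(x)$ and apply Abramov's reduction (resp.\ its $q$-analogue) to write $f=\partial_y(a)+c+\sum_{i,j}a_{i,j}/b_i^{\,j}$, where $a\in K(x,y)$, $c\in K(x)$ (with $c=0$ when $\theta_y=\si_y$), the $b_i\in K(x)[y]$ are monic, irreducible, pairwise non-$\theta_y$-equivalent, $\deg_y a_{i,j}<\deg_y b_i$, and $y\nmid b_i$ when $\theta_y=\tau_{q,y}$. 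Replacing $g$ by $g-\partial_x(a)$, which only modifies the function $a$ we are constructing, we may assume $f=c+\sum_{i,j}a_{i,j}/b_i^{\,j}$ and still $\partial_x(f)=\partial_y(g)$. Then I would use~\eqref{EQ:phi} repeatedly to change which representative of each $\theta_y$-orbit is used---each change contributing a term $\theta_y(u)-u=\partial_y(u)$ that is absorbed into $\partial_y(a)$---so that, for every $\langle\theta_x,\theta_y\rangle$-equivalence class $C$ of the $b_i$ (equivalence being up to nonzero factors in $K$), the $b_i$ lying in $C$ become exactly $\theta_x^{0}(p_C),\theta_x^{1}(p_C),\dots,\theta_x^{n_C}(p_C)$ for a single monic irreducible $p_C\in K(x)[y]$, with these $n_C+1$ polynomials pairwise non-$\theta_y$-equivalent and with both $\theta_x^{0}(p_C)$ and $\theta_x^{n_C}(p_C)$ occurring; when $C$ is periodic, i.e.\ $\theta_x^{\ell}(p)$ is $\theta_y$-equivalent to $p$ for some $p\in C$ with $\ell>0$ minimal, this requires first folding all exponents into $\{0,\dots,\ell-1\}$ (again via~\eqref{EQ:phi}), and minimality of $\ell$ secures the required pairwise non-$\theta_y$-equivalence.

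Next I would dispose of the two constant contributions. Since $\partial_x(f)=\partial_y(g)$ is $\theta_y$-summable, the residue-at-infinity clause of Lemma~\ref{LM:qres} applied in the variable $y$ forces $\partial_x(c)=0$ (the fractions $a_{i,j}/b_i^{\,j}$ contribute nothing to that residue, being proper in $y$ with $y\nmid b_i$), hence $c\in K$; and $(c,0)$ is a cyclic pair with $h=c$, $s=1$, $t=0$. Dually, once $f$ has been put in the form asserted by the theorem, the equation $\partial_x(f)=\partial_y(g)$ pins down $g$ up to a summand $w$ with $\partial_y(w)=0$, i.e.\ $w\in K(x)$; and $(0,w)$ is cyclic with $h=w$, $s=0$, $t=1$. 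Both are already of the required shape, involving a single $b_i$, so it remains to treat the part $\sum_{i,j}a_{i,j}/b_i^{\,j}$ of $f$.

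Now write $f=c+\sum_{C}\sum_{m\ge1}f_{C,m}$, where $f_{C,m}=\sum_{i=0}^{n_C}a_i^{(C,m)}/\theta_x^{i}(p_C)^{m}$ collects the multiplicity-$m$ poles of $f$ lying in the class $C$, some numerators possibly being zero. Because $\theta_x$ fixes each class and preserves pole multiplicity, and because poles lying in different classes or having different multiplicities sit in different $\theta_y$-orbits, every $\theta_y$-residue of $\partial_x(f)=\theta_x(f)-f=\partial_y(g)$---as well as its residue at infinity when $\theta_y=\tau_{q,y}$, which vanishes here since all the fractions involved are proper in $y$ with $y\nmid$ denominator---vanishes class by class and multiplicity by multiplicity, by Lemmas~\ref{LM:sres} and~\ref{LM:qres}. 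Hence $\partial_x(f_{C,m})$ has all its $\theta_y$-residues equal to zero, so $\partial_x(f_{C,m})=\partial_y(g_{C,m})$ for some $g_{C,m}\in K(x,y)$. By the normalization of the first paragraph, $f_{C,m}$ is precisely of the form required in Lemma~\ref{LM:cyclic}, which therefore produces a $(\theta_x,\theta_y)$-invariant $h_{C,m}=a_0^{(C,m)}/p_C^{\,m}$, an integer $s_{C,m}=n_C+1\in\bN$, and an integer $t_{C,m}\in\bZ$ with $\theta_x^{s_{C,m}}(h_{C,m})=\theta_y^{t_{C,m}}(h_{C,m})$, such that $f_{C,m}=\frac{\theta_x^{s_{C,m}}-1}{\theta_x-1}\bullet h_{C,m}$ and $\partial_x(f_{C,m})=\partial_y\!\left(\frac{\theta_y^{t_{C,m}}-1}{\theta_y-1}\bullet h_{C,m}\right)$.

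Finally I would assemble the pieces: let $\{b_i\}$ consist of $c$ (if nonzero), all of the $h_{C,m}$, and $w$, with the corresponding exponents $(s_i,t_i)$. Summing the identities above gives $f=\partial_y(a)+\sum_i\frac{\theta_x^{s_i}-1}{\theta_x-1}\bullet b_i$; and since $\partial_y(g)=\partial_x(f)=\partial_y\!\left(\partial_x(a)+\sum_i\frac{\theta_y^{t_i}-1}{\theta_y-1}\bullet b_i\right)$ and the kernel of $\partial_y$ on $K(x,y)$ equals $K(x)$, the only remaining freedom is the summand $w$, which has already been included, so $g=\partial_x(a)+\sum_i\frac{\theta_y^{t_i}-1}{\theta_y-1}\bullet b_i$. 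I expect the main obstacle to be the preparation carried out in the first paragraph: one must use Abramov's reduction to push all $y$-poles into distinct $\theta_y$-orbits, then re-select the orbit representatives inside each $\langle\theta_x,\theta_y\rangle$-class---folding by the period when the class is periodic---so that they become consecutive $\theta_x$-translates of one polynomial with both endpoints present, and then verify via the residue criteria that the WZ relation passes to each block so that Lemma~\ref{LM:cyclic} becomes applicable block by block. By contrast, the final summation and the treatment of the pairs $(c,0)$ and $(0,w)$ are routine bookkeeping.
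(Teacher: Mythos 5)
Your proposal is correct and follows essentially the same route as the paper: Abramov's reduction (and its $q$-analogue) to normalize the poles of $f$ into consecutive $\theta_x$-translates within each $\langle\theta_x,\theta_y\rangle$-orbit, the residue criteria to split the WZ relation block by block (per orbit and per multiplicity, and to dispose of the constant $c$ and the kernel term $w\in K(x)$), and Lemma~\ref{LM:cyclic} to identify each block as a cyclic pair. The only difference is presentational: you spell out the orbit-representative normalization and the periodic-class folding that the paper's proof absorbs into its statement of the reduced form.
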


\begin{proof}
By Abramov's reduction and its $q$-analogue, we can decompose $f$ as
\[f = \partial_y(a) + c + \sum_{j=1}^J f_j \quad \text{ with~$ f_j = \sum_{i=1}^I \sum_{\ell=0}^{L_{i, j}} \frac{a_{i, j, \ell}}{\theta_x^\ell(b_i^j)}$},\]
where $a\in K(x, y), c\in K(x)$,  and~$a_{i, j, \ell} b_i \in K(x)[y]$ such that
$c=0$ if $\theta_y = \si_y$, $\deg_y(a_{i, j, \ell})<\deg_y(b_i)$, and the $b_i$'s are irreducible and monic polynomials belonging to distinct $G$-orbits where $G=\langle \theta_x, \theta_y \rangle$.
Moreover, $\theta_x^{\ell_1}(b_i^j)$ and $\theta_x^{\ell_2}(b_i^j)$ are in distinct $\theta_y$-orbits if $\ell_1 \neq \ell_2$.
By applying Lemma~\ref{LM:inv} to the equation $\theta_x(f)-f = \theta_y(g)-g$, we get that $\theta_x(c) - c$ is $\theta_y$-summable and so is $\theta_x(f_j)-f_j$ for each multiplicity $j\in \{1, \ldots, J\}$.
By residue criteria for $\theta_y$-sumability and the assumption that the $b_i$'s are in distinct $\langle \theta_x, \theta_y \rangle$-orbits,
we have $\theta_x(c)-c = 0$ and for each $i\in \{1, \ldots, I\}$, the rational function $f_{i, j} := \sum_{\ell=0}^{L_{i, j}} {a_{i, j, \ell}}/{\theta_x^\ell(b_i^j)}$
is either equal to zero or there exists $g_{i, j}\in K(x, y)$ such that $\theta_x(f_{i, j})-f_{i, j} = \theta_y(g_{i, j})-g_{i, j}$. Then $(f_{i, j}, g_{i, j})$
is cyclic by Lemma~\ref{LM:cyclic} for every $i, j$ with $1\leq i \leq I$ and $1\leq j \leq J$. So the pair $(f, g)$ can be written as
\[(f, g) = (\partial_y(a), \partial_x(a)) + (c, 0) + \sum_{i=1}^I\sum_{j=1}^J (f_{i, j}, g_{i, j}).\]
This completes the proof. \EOP
\end{proof}

\begin{corollary}
The quotient space $\cP_{(\partial_x, \partial_y)}/\cE_{(\partial_x, \partial_y)}$ is spanned over $K$ by the set
\[ \{(f, g)+\cE_{(\partial_x, \partial_y)} \mid \text{$f, g\in K(x, y)$ such that $(f, g)$ is a cyclic pair}\}.\]
\end{corollary}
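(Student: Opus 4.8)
The plan is to read this corollary off directly from Theorem~\ref{THM:dstructure}; the substantive work has already been done there. Let $[(f,g)]$ denote the image of a WZ-pair $(f,g)$ in the quotient $\cP_{(\partial_x,\partial_y)}/\cE_{(\partial_x,\partial_y)}$. Given an arbitrary $(f,g)$ with $\partial_x(f)=\partial_y(g)$, Theorem~\ref{THM:dstructure} supplies $a,b_1,\ldots,b_n\in K(x,y)$ and exponents $s_i\in\bN$, $t_i\in\bZ$ with $(s_i,t_i)\neq(0,0)$ and $\theta_x^{s_i}(b_i)=\theta_y^{t_i}(b_i)$ such that
\[
f=\partial_y(a)+\sum_{i=1}^n\frac{\theta_x^{s_i}-1}{\theta_x-1}\bullet b_i,\qquad
g=\partial_x(a)+\sum_{i=1}^n\frac{\theta_y^{t_i}-1}{\theta_y-1}\bullet b_i.
\]

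From here I would carry out three routine checks. First, the pair $(\partial_y(a),\partial_x(a))$ lies in $\cE_{(\partial_x,\partial_y)}$ by the very definition of an exact pair (witness $h=a$), so it drops out in the quotient. Second, writing $f_i=\frac{\theta_x^{s_i}-1}{\theta_x-1}\bullet b_i$ and $g_i=\frac{\theta_y^{t_i}-1}{\theta_y-1}\bullet b_i$, and using $\partial_x=\theta_x-1$, $\partial_y=\theta_y-1$, one computes $\partial_x(f_i)=\theta_x^{s_i}(b_i)-b_i=\theta_y^{t_i}(b_i)-b_i=\partial_y(g_i)$, so each $(f_i,g_i)$ is a genuine WZ-pair with respect to $(\partial_x,\partial_y)$. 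Third, since $\theta_x$ and $\theta_y$ commute, the relation $\theta_x^{s_i}(b_i)=\theta_y^{t_i}(b_i)$ rewrites as $\theta_x^{s_i}\theta_y^{-t_i}(b_i)=b_i$ with $(s_i,-t_i)\neq(0,0)$, so $b_i$ is $(\theta_x,\theta_y)$-invariant; hence $(f_i,g_i)$ is a cyclic pair in the sense of Definition~\ref{DEF:2WZp} (taking $h=b_i$, with $s_i\geq 0$ as permitted there). Combining the three points, $[(f,g)]=\sum_{i=1}^n[(f_i,g_i)]$ exhibits the class of the arbitrary WZ-pair $(f,g)$ as a $K$-linear combination — in fact an integer combination — of classes of cyclic pairs, which is precisely the claim.

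I do not expect any genuine obstacle here: the analytic content — the Abramov-type reduction and the residue-criterion argument (Lemma~\ref{LM:cyclic}) forcing each multiplicity-$j$ block to be cyclic — has been absorbed into Theorem~\ref{THM:dstructure}, so the corollary is a bookkeeping restatement of that theorem modulo the subspace of exact pairs. The only mildly delicate points are the two verifications above, that each summand is an honest WZ-pair and that each $b_i$ is $(\theta_x,\theta_y)$-invariant so that Definition~\ref{DEF:2WZp} applies, and both are one-line checks. If desired one could additionally remark that the spanning set may be trimmed (e.g.\ to cyclic pairs whose witness lies in a single $V_m$ with irreducible-power denominator), but this is not needed for the statement as given.
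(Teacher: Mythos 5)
Your proposal is correct and is exactly how the corollary follows in the paper: it is stated there without separate proof as an immediate consequence of Theorem~\ref{THM:dstructure}, and your three verifications (the exact summand $(\partial_y(a),\partial_x(a))$ dies in the quotient, each $(f_i,g_i)$ satisfies $\partial_x(f_i)=\theta_x^{s_i}(b_i)-b_i=\theta_y^{t_i}(b_i)-b_i=\partial_y(g_i)$, and $\theta_x^{s_i}\theta_y^{-t_i}(b_i)=b_i$ makes $b_i$ invariant so Definition~\ref{DEF:2WZp} applies) are precisely the bookkeeping the paper leaves implicit.
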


\subsection{The mixed case} \label{SSECT:mixed}

In the mixed continuous-discrete setting, we consider the rational WZ-pairs with respect to $(\theta_x-1, D_y)$ with $\theta_x\in \{\si_x, \tau_{q, x}\}$.

\begin{lemma}\label{LM:spoly}
Let $p$ be an irreducible and monic polynomial in $K(x)[y]$.
Then for any nonzero $m\in \bZ$,  we have either $\gcd(p, \theta_x^m(p))=1$ or $p\in K[y]$.
\end{lemma}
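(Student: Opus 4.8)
The plan is to prove the contrapositive: assuming $\gcd(p,\theta_x^m(p)) \neq 1$ for some nonzero $m \in \bZ$, I will deduce that $p \in K[y]$. Since $p$ is irreducible in $K(x)[y]$ and $\theta_x^m(p)$ is also irreducible of the same $y$-degree (as $\theta_x$ is an automorphism of $K(x,y)$ fixing $K(y)$), a nontrivial common factor forces $\theta_x^m(p) = u \cdot p$ for some unit $u \in K(x)^\times$. First I would normalize: both $p$ and $\theta_x^m(p)$ are monic in $y$ of the same degree, so comparing leading $y$-coefficients gives $u = 1$ when $\theta_x = \si_x$; when $\theta_x = \tau_{q,x}$ the leading coefficient is unchanged under $\tau_{q,x}$ on a polynomial already monic in $y$, so again $u=1$. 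Hence $\theta_x^m(p) = p$, i.e. $p$ is fixed by $\theta_x^m$.

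Now the key step is to apply the ``fixed field'' fact already invoked repeatedly in the excerpt: $\theta_x^m$ (for $m \neq 0$) is an automorphism of $K(x,y)$ whose fixed field is exactly $K(y)$. This was stated at the start of Section~\ref{SSECT:dq} for $\theta_x \in \{\si_x, \tau_{q,x}\}$ — here one uses that $q$ is not a root of unity, so $\tau_{q,x}^m$ is nontrivial. Therefore $\theta_x^m(p) = p$ implies $p \in K(x,y)$ with $p$ fixed by $\theta_x^m$, hence $p \in K(y)$. Since $p \in K(x)[y]$ is already a polynomial in $y$ and now lies in $K(y)$, writing $p = \sum_i p_i(x) y^i$ and using $p \in K(y)$ forces each coefficient $p_i$ to lie in $K$; thus $p \in K[y]$, as desired.

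I expect the main (modest) obstacle to be the bookkeeping in the first step — justifying cleanly that a nontrivial gcd between two irreducibles of equal degree forces proportionality, and then pinning the proportionality constant to $1$ in the $q$-shift case. Concretely: view $p, \theta_x^m(p) \in K(x)[y]$ as elements of the UFD (indeed PID) $K(x)[y]$; irreducibility of $p$ means its only monic divisors are $1$ and $p$, so $\gcd(p,\theta_x^m(p)) \neq 1$ gives $p \mid \theta_x^m(p)$, and then equality of $y$-degrees upgrades this to $\theta_x^m(p) = u\, p$. The leading-coefficient comparison to get $u = 1$ is the same routine argument used in the proof of Lemma~\ref{LM:constant}, so I would simply cite or mirror it. Everything after that is a direct appeal to the stated fixed-field property of $\theta_x^m$, with no further computation.
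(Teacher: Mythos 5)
Your proof is correct and follows essentially the same route as the paper's: irreducibility plus a nontrivial gcd forces $\theta_x^m(p)=cp$, the monic leading coefficient in $y$ pins $c=1$, and then the fixed-field property of $\theta_x^m$ (applied coefficient-wise in the paper, to $p$ as a whole in your version) yields $p\in K[y]$. The two presentations differ only in this last cosmetic detail, so nothing further is needed.
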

\begin{proof}
Since $\theta_x$ is an automorphism on $K(x, y)$, $\theta_x^i(p)$ is irreducible in $K(x)[y]$ for any $i\in \bZ$.
If $\gcd(p, \theta_x^m(p)) \neq 1$, then $\theta_x^m(p)= c p$ for some $c\in K(x)$. Write $p = \sum_{i=0}^d p_i y^i$ with $p_i\in K(x)$ and $p_d= 1$.
Then $\theta_x^m(p)= c p$ implies that $\theta_x^m(p_i) = c p_i$ for all $i$ with $0\leq i \leq d$. Then $c=1$ and $p_i\in K$ for all $i$ with $0\leq i \leq d-1$.
So $p\in K[y]$. \EOP
\end{proof}

The structure of WZ-pairs in the mixed setting is as follows.

\begin{theorem}\label{THM:mstructure}
Let $f, g\in K(x, y)$ be such that $\theta_x(f) -f   = D_y(g)$. Then there exist $h\in K(x, y)$, $u\in K(y)$ and $v\in K(x)$ such that
\[f  = D_y(h) + u \quad \text{and} \quad g  = \theta_x(h) - h  + v.\]
\end{theorem}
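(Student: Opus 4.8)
The plan is to adapt the scheme used for Theorems~\ref{THM:cstructure} and~\ref{THM:dstructure}: first remove a $D_y$-integrable part of $f$ by Hermite reduction, then invoke the residue criterion for $D_y$-integrability (Lemma~\ref{LM:dres}) to show that the Hermite remainder must lie in $K(y)$, and finally read off $g$ from the resulting identity.

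\textbf{Step 1 (Hermite reduction in the variable $y$).} Regard $K(x,y)$ as $K(x)(y)$ with constant field $K(x)$. By the Ostrogradsky--Hermite reduction I would write $f = D_y(h) + \tilde f$ with $h \in K(x,y)$ and $\tilde f = a/b$, where $a,b \in K(x)[y]$, $\gcd(a,b)=1$, $b$ monic and squarefree, and $\deg_y a < \deg_y b$. Substituting into $\theta_x(f) - f = D_y(g)$ and setting $\hat g := g - \theta_x(h) + h$ gives $D_y(\hat g) = \theta_x(\tilde f) - \tilde f$. Since $b$ and $\theta_x(b)$ are squarefree, so is the denominator of $\theta_x(\tilde f) - \tilde f$, and hence by Lemma~\ref{LM:dres} all of its $D_y$-residues (at the irreducible factors of its denominator) vanish.

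\textbf{Step 2 (the remainder lies in $K(y)$).} Decompose $\tilde f = \sum_i a_i/p_i$ into irreducible partial fractions over $K(x)[y]$ and split the $p_i$ into those lying in $K[y]$ and those that do not. For $p \notin K[y]$, Lemma~\ref{LM:spoly} gives that $\theta_x^j(p)$, $j \in \bZ$, are monic, irreducible and pairwise coprime, so the part of $\tilde f$ supported on the $\theta_x$-orbit of $p$ contributes only to the $D_y$-residues of $\theta_x(\tilde f) - \tilde f$ at members of that orbit. Denoting by $b_j$ the numerator of $\tilde f$ over $\theta_x^j(p)$ (and $b_j = 0$ if that factor is absent), a direct computation turns the vanishing of those residues into $b_j = \theta_x(b_{j-1})$ for all $j \in \bZ$; since $\theta_x$ is an automorphism of $K(x,y)$, one nonzero $b_{j_0}$ would force $b_j \neq 0$ for every $j \geq j_0$, contradicting the finiteness of the partial-fraction decomposition. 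Thus every $p_i$ lies in $K[y]$, so $\theta_x(p_i) = p_i$ and the $D_y$-residue of $\theta_x(\tilde f) - \tilde f$ at $p_i$ equals $\theta_x(a_i) - a_i$; its vanishing gives $\theta_x(a_i) = a_i$. Writing $a_i = \sum_k a_{ik} y^k$ with $a_{ik} \in K(x)$ and using that $\theta_x$ has no non-constant invariant in $K(x)$ (this is where the hypothesis that $q$ is not a root of unity enters), I get $a_{ik} \in K$, hence $a_i \in K[y]$ and $\tilde f \in K(y)$. Put $u := \tilde f \in K(y)$.

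\textbf{Step 3 (recovering $g$).} As $u \in K(y)$ is $\theta_x$-invariant, $\theta_x(\tilde f) - \tilde f = 0$, so $D_y(\hat g) = 0$, whence $\hat g =: v \in K(x)$. Unwinding the definition of $\hat g$ yields $f = D_y(h) + u$ and $g = \theta_x(h) - h + v$, as claimed.

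I expect Step 2 to be the only real obstacle: a priori $\tilde f$ may carry denominators that $\theta_x$ moves, and the orbit-plus-telescoping argument together with Lemma~\ref{LM:spoly} is exactly what rules this out. Once the denominator of $\tilde f$ is $\theta_x$-stable, the differential residue criterion forces the numerators into $K[y]$ and the conclusion follows immediately; note also that, in contrast to the $(q)$-shift case, no ``cyclic'' or ``log-derivative'' building blocks appear here, since the only surviving obstruction is the purely one-variable piece $u \in K(y)$.
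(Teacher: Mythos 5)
Your proposal is correct and follows essentially the same route as the paper's proof: Hermite reduction in $y$ over the constant field $K(x)$, the residue criterion of Lemma~\ref{LM:dres} applied to $\theta_x(f)-f$, and Lemma~\ref{LM:spoly} to rule out denominators outside $K[y]$ (the paper phrases this by isolating the uncancelled leading term $\theta_x^{J_i+1}(b_i)$ rather than your telescoping relation $b_j=\theta_x(b_{j-1})$, but the mechanism is identical). The only cosmetic difference is that you make explicit the step $\theta_x(a_i)=a_i\Rightarrow a_i\in K[y]$, which the paper leaves implicit.
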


\begin{proof}
By the Ostrogradsky--Hermite reduction, we decompose $f$ into the form
\[f = D_y(h) + \sum_{i=1}^I \sum_{j=0}^{J_i} \frac{a_{i, j}}{\theta_x^j(b_i)},\]
where $h\in K(x, y)$ and $a_{i, j}, b_i \in K(x)[y]$ with $a_{i, J_i}\neq 0$, $\deg_y(a_{i, j})<\deg_y(b_i)$ and
$b_i$ being irreducible and monic polynomials in $y$ over $K(x)$ such that the $b_i$'s are in distinct $\theta_x$-orbits.
By a direct calculation, we get
\[\theta_x(f)  - f = D_y(\theta_x(h)- h)+ \sum_{i=1}^I \left(\frac{\theta_x(a_{i, J_i})}{\theta_x^{J_i+1}(b_i)} - \frac{a_{i, 0}}{b_i} + \sum_{j=1}^{J_i} \frac{\theta_x(a_{i, j-1})-a_{i, j}}{\theta_x^j(b_i)}\right) .\]
For all $i, j$ with $1\leq i \leq I$ and $0\leq j \leq J_i + 1$, the $\theta_x^j(b_i)$'s are irreducible and monic polynomials in $y$ over $K(x)$.
We first show that for each $i\in \{1, \ldots, I\}$, we have $b_i\in K[y]$. Suppose that there exists $i_0\in \{1, \ldots, I\}$, $b_{i_0}\notin K[y]$. Then $\gcd(\theta_x^{m}(b_{i_0}), b_{i_0})=1$ for any nonzero $m\in \bZ$ by Lemma~\ref{LM:spoly}. Since $\theta_x(f)-f$ is $D_y$-integrable in $K(x, y)$, we have
$\theta_x(a_{i_0, J_{i_0}}) = 0$ by Lemma~\ref{LM:dres}. Then $a_{i_0, J_{i_0}}=0$, which contradicts the assumption that $a_{i, J_i}\neq 0$ for all $i$
with $1\leq i \leq I$. Since $b_i\in K[y]$, $f$ can be written as
\[f = D_y(h) + \sum_{i=1}^I \frac{a_i}{b_i}, \quad \text{where $a_i := \sum_{j=0}^{J_i} a_{i, j} $.}\]
Since $\theta_x(f)-f$ is $D_y$-integrable in $K(x, y)$ and since
\[ \theta_x(f)- f = D_y(\theta_x(h)-h) + \sum_{i=1}^I\frac{\theta_x(a_i)-a_i}{b_i},\]
we have  $\theta_x(a_i)-a_i = 0$ for each $i\in \{1, \ldots, I\}$ by Lemma~\ref{LM:dres}. This implies that $a_i\in K(y)$ and $f = D_y(h) + u$ with $u = \sum_{i=1}^I a_i/b_i\in K(y)$. Since $\theta_x(f)-f = D_y(g)$, we get $D_y(g - (\theta_x(h)-h))=0$. Then $g = \theta_x(h)-h + v$ for some $v\in K(x)$. \EOP
\end{proof}

\begin{corollary}
The quotient space $\cP_{(\theta_x-1, D_y)}/\cE_{(\theta_x-1, D_y)}$ is spanned over $K$ by the set
\[ \{(f, g)+\cE_{(\theta_x-1, D_y)} \mid \text{$f\in K(y)$ and $g\in K(x)$}\}.\]
\end{corollary}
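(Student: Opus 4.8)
The plan is to imitate the proof of Theorem~\ref{THM:dstructure}, replacing Abramov's reduction by the Ostrogradsky--Hermite reduction and exploiting that on the $D_y$-side a squarefree denominator leaves at most one residue per irreducible factor. First I would put $f$ in a normal form: apply the Ostrogradsky--Hermite reduction of $f$ with respect to $D_y$, viewing $K(x)$ as the field of constants, to get $f = D_y(h_0) + P/Q$ with $Q\in K(x)[y]$ monic and squarefree, $\gcd(P,Q)=1$ and $\deg_y P<\deg_y Q$; then factor $Q$ into monic irreducibles over $K(x)$, sort these factors into $\theta_x$-orbits, choose one representative $b_i$ of each orbit occurring in $Q$, and expand by partial fractions to obtain
\[f = D_y(h) + \sum_{i=1}^I\sum_{j=0}^{J_i}\frac{a_{i,j}}{\theta_x^j(b_i)},\]
where the $b_i\in K(x)[y]$ are irreducible and monic, lie in pairwise distinct $\theta_x$-orbits, satisfy $\deg_y a_{i,j}<\deg_y b_i$, and (after shrinking each $J_i$) $a_{i,J_i}\neq 0$. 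This is exactly the shape assumed in the statement's proof.

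Next I would apply $\theta_x - 1$. A direct computation gives
\[\theta_x(f)-f = D_y(\theta_x(h)-h) + \sum_{i=1}^I\left(\frac{\theta_x(a_{i,J_i})}{\theta_x^{J_i+1}(b_i)} - \frac{a_{i,0}}{b_i} + \sum_{j=1}^{J_i}\frac{\theta_x(a_{i,j-1})-a_{i,j}}{\theta_x^j(b_i)}\right),\]
so that $R := \theta_x(f)-f-D_y(\theta_x(h)-h) = D_y(g-\theta_x(h)+h)$ is $D_y$-integrable. Using Lemma~\ref{LM:spoly} inside each orbit (the powers $\theta_x^j(b_i)$ are pairwise coprime unless $b_i\in K[y]$) together with the distinctness of the orbits for different $i$, the denominator of $R$ is a product of pairwise distinct irreducibles, hence squarefree, so Lemma~\ref{LM:dres} shows that every $D_y$-residue of $R$ vanishes. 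Now if some $b_{i_0}\notin K[y]$, then by Lemma~\ref{LM:spoly} the polynomial $\theta_x^{J_{i_0}+1}(b_{i_0})$ is coprime to each $\theta_x^j(b_{i_0})$ with $0\le j\le J_{i_0}$, and --- lying in the orbit of $b_{i_0}$ --- to each $\theta_x^j(b_{i'})$ with $i'\neq i_0$; hence it appears as a genuine irreducible factor of the denominator of $R$ with residue $\theta_x(a_{i_0,J_{i_0}})$, which must therefore be zero, forcing $a_{i_0,J_{i_0}}=0$ against our normalization. So $b_i\in K[y]$ for every $i$, and $\theta_x$ fixes every $b_i$.

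With $\theta_x^j(b_i)=b_i$ we may write $f=D_y(h)+\sum_{i=1}^I a_i/b_i$ where $a_i:=\sum_{j=0}^{J_i}a_{i,j}$, the $b_i$ are distinct irreducibles, and $R=\sum_{i=1}^I(\theta_x(a_i)-a_i)/b_i$. Vanishing of the residue at each $b_i$ gives $\theta_x(a_i)=a_i$, hence $a_i\in K(y)$ since $\theta_x$ fixes exactly $K(y)$ inside $K(x,y)$. Setting $u:=\sum_i a_i/b_i\in K(y)$ yields $f=D_y(h)+u$; moreover $\theta_x(u)=u$, so $D_y(g)=\theta_x(f)-f=D_y(\theta_x(h)-h)$, whence $g-\theta_x(h)+h\in\ker D_y=K(x)$, and calling this element $v$ completes the proof.

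The only step calling for care is the set-up: one must arrange the Hermite-reduced partial fraction so that $\theta_x(f)-f$ has a squarefree denominator sorted by $\theta_x$-orbits --- so that the elementary criterion Lemma~\ref{LM:dres} applies with no higher-multiplicity bookkeeping --- and must check that no cancellation can kill the boundary term $\theta_x(a_{i,J_i})/\theta_x^{J_i+1}(b_i)$. Everything after that is a short residue chase, in fact simpler than the $(q)$-shift case of Theorem~\ref{THM:dstructure}, since on the $D_y$-side each irreducible factor carries at most one residue.
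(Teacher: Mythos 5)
Your argument is correct and follows essentially the same route as the paper: it reproduces the proof of Theorem~\ref{THM:mstructure} (Ostrogradsky--Hermite reduction in $y$, then Lemma~\ref{LM:spoly} and the residue criterion of Lemma~\ref{LM:dres} to force $b_i\in K[y]$ and $\theta_x(a_i)=a_i$), from which the corollary is immediate since $(f,g)=(D_y(h),\theta_x(h)-h)+(u,v)$ with the first pair exact and $u\in K(y)$, $v\in K(x)$. The only cosmetic difference is that you carry out the residue chase on the squarefree denominator of $\theta_x(f)-f$ all at once rather than orbit by orbit, which changes nothing of substance.
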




\section{Conclusion} \label{SECT:conc}
We have explicitly described the structure of rational WZ-pairs in terms of special pairs.
With structure theorems, we can easily generate rational WZ-pairs, which solves Problem~\ref{PB:WZ}
in the rational case completely.
For the future research, the next direction is to solve
the problem in the cases of more general functions.
Using the terminology of Gessel in~\cite{Gessel1995}, a hypergeometric term $F(x, y)$ is said to be a \emph{WZ-function} if
there exists another hypergeometric term $G(x, y)$ such that $(F, G)$ is a WZ-pair. In the scheme of creative telescoping,
$(F, G)$ being a WZ-pair with respect to $(\partial_x, \partial_y)$ is equivalent to that
$\partial_x$ being a telescoper for $F$ with certificate $G$. Complete criteria for the
existence of telescopers for hypergeometric terms and their variants are known~\cite{Abramov2003, ChenHouMu2005, CCFFL2015}.
With the help of existence criteria for telescopers, one can show that $F(x, y)$ can be decomposed as the sum
$F = \partial_y(H_1) + H_2$ with $H_1, H_2$ being hypergeometric terms and $H_2$ is of proper form (see definition in~\cite{WilfZeilberger1992, Gessel1995})
if $F$ is a WZ-function.
So it is promising to apply the ideas in the study of the existence problem of telescopers to explore the structure of WZ-pairs.

\bigskip
\noindent {\bf Acknowledgment.}~
I would like to thank Prof.\ Victor J.W. Guo and Prof.\ Zhi-Wei Sun for many discussions on series for special
constants, (super)-congruences and their $q$-analogues that
can be proved using the WZ method. I am also very grateful to Ruyong Feng and Rong-Hua Wang for many constructive
comments on the earlier version of this paper. I also thank the anonymous reviewers for their constructive
and detailed comments.

\bibliographystyle{plain}

\end{document}